\documentclass[reqno,a4paper]{amsart}

\usepackage{hyperref,amsrefs} 
\usepackage[latin1]{inputenc}
\usepackage{amssymb}
\usepackage{mathrsfs}
\usepackage{graphicx}
\newtheorem{theorem}{Theorem}
\newtheorem{lemma}{Lemma}
\newtheorem{prop}{Proposition}
\newtheorem{cor}{Corollary}
\theoremstyle{definition}
\newtheorem{definition}{Definition}
\newtheorem{remark}{Remark}
\newtheorem{example}{Example}
\newtheorem{cond}{Condition}
\DeclareMathOperator\Prim{Prim}
\DeclareMathOperator\End{End}

\begin{document}

\author{A.~Baldare}
\address{Alexandre Baldare, Institute of Analysis,
Leibniz University Hannover,
Welfengarten 1,
30167 Hannover, Germany}
\email{alexandre.baldare@ac-montpellier.fr}

\author{V.~E.~Nazaikinskii}
\address{Vladimir~E.~Nazaikinskii, Ishlinsky Institute for Problems in Mechanics RAS, Moscow, 119526 Russia}
\email{nazaikinskii@yandex.ru}

\author{A.~Yu.~Savin}
\address{Anton Yu. Savin,  
Peoples' Friendship University of Russia (RUDN University),
6 Miklukho-Maklaya St
117198 Moscow, Russia
}
\email{a.yu.savin@gmail.com}

\author{E.~Schrohe}
\address{Elmar Schrohe, Institute of Analysis,
Leibniz University Hannover,
\mbox{Welfengarten 1},
30167 Hannover, Germany}
\email{schrohe@math.uni-hannover.de}

\title[$C^*$-algebras of elliptic transmission problems with shift operators]{$C^*$-algebras of transmission problems and  elliptic boundary value problems with shift operators}

\begin{abstract}
We study the Fredholm solvability for a new class of nonlocal boundary value problems associated with group actions on smooth manifolds. Namely, we consider the case in which the group action is defined on an ambient manifold without boundary and does not preserve  the manifold with boundary on which the problem is stated. In particular, the group action does not map the boundary to itself. The orbits of the boundary under the group action split the manifold into subdomains, and this decomposition, being combined with the $C^*$-algebra techniques, plays an important role in our approach to the analysis of the problem.\\
{\bf Keywords:} Manifold with boundary, nonlocal operator, group action, ellipticity, Fredholm property, C*-algebra, crossed product.
\end{abstract}

\markright{$C^*$-algebras of elliptic transmission problems with shift operators}
\setcounter{footnote}{1}
\footnotetext{The reported study was funded by RFBR,  project 21-51-12006, and DFG, project SCHR 319/10-1.}

\markright{$C^*$-algebras of elliptic transmission problems with shift operators}

\maketitle

\section{Introduction}
The aim of this paper is to study the Fredholm solvability for a new class of nonlocal boundary value problems associated with group actions on smooth manifolds.
Unlike the classical elliptic theory, where the ellipticity condition is stated at each point of the cosphere bundle of the manifold,  the Fredholm property of nonlocal problems associated with group actions requires ellipticity conditions stated on the orbits of these points. This is a consequence of the fact that the group action induces representations in the associated function spaces  by shift operators, which are nonlocal.
Of particular importance and difficulty is the case, when the group and its orbits are infinite. Here, methods of $C^*$-algebras can be applied  successfully (see~\cite{Ant1,Ant2,AnLe1, AnLe2, AnLe3} and the references therein).
In the cited works, one considers a compact smooth manifold with boundary and a discrete group acting on this manifold (or only on its boundary) by diffeomorphisms. In particular, the boundary is invariant under the group action. Such nonlocal problems are studied as follows. First, one reduces the boundary value problem to a zero-order pseudodifferential  boundary problem. Then the algebra of pseudodifferential boundary value problems of order zero is completed to a $C^*$-algebra, its algebra of symbols is identified and its spectrum is computed. Finally, one can use general results from $C^*$-algebras (namely the isomorphism theorem in~\cite{AnLe2}) to define symbols for nonlocal boundary value problems and prove the Fredholm property.

Note, however, that numerous problems in applications (e.g., in mechanics and  probability, see~\cite{BiSo3,OnSk1,OnTs1,OnSk2,Sku1, Sku2, Sku3} and the references therein) lead to a different class of nonlocal problems, for which the group action does not preserve  the manifold with boundary in which the problem is stated (see examples in Fig.~\ref{fig1}). In this case one has to consider the action of a group $\Gamma$ on an ambient manifold without boundary and a submanifold $M$ with boundary that is not invariant under the group action. In particular, the group action does not map $\partial M$ to itself. Then the orbits of the boundary under the group action decompose $M$ into subdomains and this decomposition plays an important role in the analysis of the problem (see~\cite{Sku1}). Note that this decomposition is finite and each subdomain has smooth boundary in Fig.~\ref{fig1}a; is finite but the boundaries of the subdomains have singularities in Fig.~\ref{fig1}b; is infinite  but subdomains have smooth boundaries in Fig.~\ref{fig1}c. It turns out that these three cases are fundamentally different and require quite different methods to study them (we refer to~\cite{Sku1,SkTs1,Ross1} for details).

\begin{figure}
\label{fig1}
\centering
\includegraphics[width=0.9\textwidth]{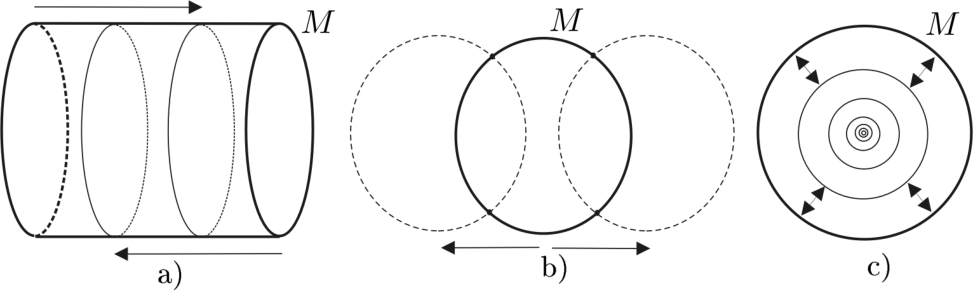}
\caption{a) shifts of finite cylinder; b) shifts of a disc; c) dilations of disc.}
\end{figure}

Somewhat surprisingly, these efficient  $C^*$-algebraic methods  have not yet been applied to such problems. The aim of this paper is to bridge this gap. We combine both techniques from $C^*$-algebras and the decomposition of our manifold with boundary into subdomains. We work under the regularity assumption that two images of the boundary under diffeomorphisms in the group are either disjoint or coincide. In particular, our theorems apply to the situations described in Fig.~\ref{fig1}, a) and c) but do  not apply to b). This regularity condition implies that the union of the images of the boundary is a smooth manifold (possibly noncompact) and we can use the pseudodifferential calculus of boundary value problems with transmission conditions (see~\cite[Section 3.1]{ReSc1}). We also tackle the  technically difficult case, when the images of the boundary $\gamma(\partial M)$, $\gamma\in\Gamma$, under the group action  accumulate in $M$ as in Fig.~\ref{fig1}c.

Let us briefly describe the contents of the paper. We start in Section~\ref{sec2} by recalling the main definitions and results  about Boutet de Monvel transmission problems from~\cite[Section 3.1]{ReSc1}. These are matrix problems  for pseudodifferential operators on a manifold $Y$  with conditions on a closed submanifold $Z$ of codimension one. Such problems form an algebra and we describe their symbols. In Section~\ref{sec3},
we assume that we are given an action of a discrete group $\Gamma$ on $Y$ by diffeomorphisms and that this action satisfies the above regularity assumption with respect to $Z$. Then the  set $X=\Gamma(Z)$ of orbits of $Z$ is in general a noncompact manifold but not a submanifold in $Y$ due to the possible accumulation of the  sets $\gamma(Z)$, $\gamma \in \Gamma$. The algebra of transmission problems on $Y$ with transmission conditions on $X$ is denoted by   $\Psi(Y,X)$. Then $\Psi(Y,X)$ and the shift operators associated with $\Gamma$ generate an algebra  denoted by $\Psi_\Gamma(Y,X)$. This algebra is the central object in this paper.
We define the trajectory symbol mappings on $\Psi_\Gamma(Y,X)$ and characterize the Fredholm property by a suitable notion of ellipticity. For simplicity, we work with matrix operators acting between ranges of projections. Operators acting in sections of vector bundles can be considered similarly, if we realize the sections of vector bundles as ranges of suitable projections. Section~\ref{sec4} contains an application to nonlocal boundary value problems. Namely, we choose a compact submanifold $M\subset Y$  with boundary $\partial M$, consider the algebra $\Psi_\Gamma(Y,X)$, where $X=\Gamma(\partial M)$, and finally study the restriction of $\Psi_\Gamma(Y,X)$ to $M$. We note that elements in this algebra arise in applications, when we make the order reduction for the nonlocal boundary value problems considered in~\cite{BiSo3,Sku1}. As an illustration, we consider the special case of operators with dilations in a unit disc. Section~\ref{sec5} contains the proof of the main results using $C^*$-algebra techniques.  To this end, the spectrum of the norm closure of $\Psi(Y,X)$ is computed. The results in these sections are obtained under the condition that the action of $\Gamma$ on the spectrum of the algebra of symbols is topologically free.
This condition is weakened in Section~\ref{sec6} and an example of dilations of the cylinder is elaborated. There is an appendix at the end of the paper, where necessary results from the theory of $C^*$-algebras are collected.

Our results for  algebras of pseudodifferential operators with shifts are new, while for the case of infinite number of subdomains they give new results also for differential operators with shifts (however, see special case of operators with linear dilations in~\cite{Ross1}). We intend to describe these applications in detail elsewhere. Note also that $C^*$-algebras of pseudodifferential operators are a useful tool in index theory of nonlocal problems, see~\cite{NaSaSt17,SaSchSt2,SaSch1,Per5,BolSa2}.

We are grateful to A.L.~Skubachevskii for numerous fruitful discussions on nonlocal problems.

\section{Boutet de Monvel Type Transmission Problems}\label{sec2}

\subsection*{Main definitions and notation.}
We gather some basic facts about the Boutet de Monvel algebra of transmission problems (see~\cite{Bout2,ReSc1,Schr3,Gru3} and especially~\cite[Section 3.1]{ReSc1}). Let $Y$ be a closed compact smooth manifold and $Z\subset Y$ a closed smooth submanifold of codimension one.  Denote by $Y_Z$ the compact manifold with boundary  obtained by cutting $Y$ along $Z$. The boundary of $Y_Z$  is isomorphic to the unit conormal bundle of $Z\subset Y$. If the normal bundle of $Z$ is trivial, then the boundary of $Y_Z$ is equal to the disjoint union $Z\sqcup Z$.

Consider the Hilbert space
$$
\mathcal{H}=L^2(Y,{\rm{vol}}_Y)\oplus L^2(Z,{\rm{vol}}_Z),
$$
where ${\rm{vol}}_Y$ and ${\rm{vol}}_Z$ are volume forms on $Y$ and $Z$ respectively.
The Boutet de Monvel algebra of operators of order and type zero, denoted in the sequel  by $\Psi(Y,Z)\subset\mathcal{B(H)}$, consists of operators of the form
\begin{equation}
\label{bdm_operator}
\mathcal{D}=
\begin{pmatrix}
D_Y+G_Z & C_Z\\
B_Z &  D_Z
\end{pmatrix}
:\mathcal{H}\longrightarrow\mathcal{H},
\end{equation}
where
\begin{itemize}
\item $D_Y$ and $D_Z$ are pseudodifferential operators ($\psi$DOs below) of order zero on manifolds $Y_Z$ and $Z$ respectively, the operator $D_Y$ satisfies the transmission property (see Remark~\ref{rem1}, below) on the boundary of $Y_Z$;
\item $B_Z$ is a boundary operator on $Z$, $C_Z$ is a coboundary operator on $Z$;
\item $G_Z$ is a (singular) Green operator on $Z$.
\end{itemize}
In a neighborhood of a point $m\in Z$, we choose local coordinates $(x',x_n)$ on $Y$, where $x_n$ is a defining function of $Z$; so that locally $Z=\{x_n=0\}$, while $x'$ stands for local coordinates on $Z$. Denote the dual coordinates in $T^*_mY$ by $(\xi',\xi_n)$.

\begin{remark}\label{rem1}
A $\psi$DO of order $d\in\mathbb{Z}$ with classical symbol $a=a(x',x_n,\xi',\xi_n)$ having an asymptotic expansion
\begin{equation}
\label{asymp_exp}
a\sim a_d+a_{d-1}+\dots
\end{equation}
with homogeneous components $a_l$ satisfies the {\em transmission property}, if
\begin{equation}
\label{trans_prop}
D^k_{x_n}D^\alpha_{\xi'}a_l(x',0,0,\xi_n)=(-1)^{l-|\alpha|}D^k_{x_n}D^\alpha_{\xi'}a_l(x',0,0,-\xi_n),\quad \xi_n\ne 0,
\end{equation}
for all $k\in\mathbb{Z}_+, l\leq d$ and $\alpha=(\alpha_1,\dots,\alpha_{n-1})\in\mathbb{Z}^{n-1}_+$, where
$$
D^\alpha_{\xi'}=\left(-i\frac{\partial }{\partial \xi_1}\right)^{\alpha_1}\cdot \dots \cdot \left(-i\frac{\partial }{\partial \xi_{n-1}}\right)^{\alpha_{n-1}}.
$$
\end{remark}

The {\it conormal bundle} of a submanifold $Z\subset Y$ is the subbundle over $Z$
$$
\mathcal{N}^*Z=\big\{
\xi\in T^*Y|_Z \quad\big|\quad \xi\in T^*_zY \text{ and } \xi(v)=0 \text{ for all } v\in T_z Z\subset T_z Y
\big\}.
$$
By $L^2(\mathcal{N}^*Z)$ we denote a locally trivial bundle over $Z$ with fibers equal to  the Hilbert space $L^2(\mathcal{N}^*_z Z)$, where on $\mathcal{N}^*_z Z$ we consider a measure induced by a Riemannian metric on $Y$. We also denote by $S^*Y_Z$, $S^*Z$ the cosphere bundles of $Y_Z$ and $Z$, given as the quotient of the cotangent bundles with zero section deleted modulo the action of $\mathbb R^*_+$.

Boutet de Monvel operators as in \eqref{bdm_operator} have two symbols:
\begin{itemize}
\item the {\em{interior symbol}} denoted by $\sigma_{\rm{int}}(\mathcal{D})=\sigma(D_Y)\in C^\infty(S^*Y_Z)$;
\item the {\em{boundary symbol}}   denoted by:
$$
\sigma_{Z}(\mathcal{D})=\sigma_{Z}
\left(
\begin{matrix}
D_Y+G_Z & C_Z\\
B_Z & D_Z
\end{matrix}
\right)
\in C^\infty(S^*Z, \End(L^2(\mathcal{N}^*Z)\oplus\mathbb{C})).
$$
Here $\End(L^2(\mathcal{N}^*Z)\oplus\mathbb{C})$ stands for the locally trivial bundle over $S^*Z$ with fiber equal to the algebra of bounded operators $\mathcal{B}(L^2(\mathcal{N}^*_z Z)\oplus\mathbb{C})$ in the indicated Hilbert space.
\end{itemize}

In what follows, we treat functions on $S^*Y_Z$ as homogeneous functions on $T^*Y_Z\setminus 0$, while sections
$a\in C^\infty(S^*Z, \End(L^2(\mathcal{N}^*Z)\oplus\mathbb{C}))$ are treated as  sections on $T^*Z\setminus 0$, which are twisted homogeneous in the following sense:
$$
a(z,\lambda\zeta)= {\kappa}_\lambda^{-1} a(z,\zeta){\kappa}_\lambda,\qquad \forall \lambda\in\mathbb{R}^*_+,(z,\zeta)\in T^*Z\setminus 0,
$$
where we consider unitary operators ${\kappa}_\lambda\in \mathcal{B}(L^2(\mathbb{R})\oplus\mathbb{C})$
\begin{equation}\label{kappalambda}
{\kappa}_\lambda(u(t),v)=(\lambda^{1/2}u(\lambda
t),v).
\end{equation}

\subsection*{Symbols in local coordinates.}
We consider $Y=\mathbb{R}^n, Z=\mathbb{R}^{n-1}=\{x_n=0\}\subset\mathbb{R}^n$. Let $a_\pm \in C^\infty_c(S^*\overline{\mathbb{R}^n_\pm})$ be {\it interior symbols} with the transmission property on $\mathbb{R}^{n-1}$. We extend these functions to $T^*\overline{\mathbb{R}^n_+}\setminus 0$ by homogeneity in $(\xi',\xi_n)$ of degree zero.
Then the boundary symbol
\begin{equation}
\label{bound_symb_trans}
a_Z\in C^\infty_c(S^*\mathbb{R}^{n-1},\mathcal{B}(L^2(\mathbb{R}_{\xi_n})\oplus\mathbb{C}))
\end{equation}
is a smooth matrix operator-function equal to
\begin{equation}
\label{bound_symb_trans_exp}
\begin{split}
a_Z&(x',\xi')\\
&=\begin{pmatrix}
\Pi^+a_+ \Pi^+ + \Pi^-a_- \Pi^- +
\Pi^+_{0,\eta_n}g_+(\xi_n,\eta_n)\Pi^+ +
\Pi^-_{0,\eta_n}g_-(\xi_n,\eta_n)\Pi^-
& c(\xi_n) \vspace{3mm}\\
\Pi^+_0b_+(\xi_n)\Pi^+ + \Pi^-_0b_-(\xi_n)\Pi^- & d
\end{pmatrix},
\end{split}
\end{equation}
where we skip the arguments of $a_\pm$ for brevity.
We use the following notation:
\begin{itemize}
\item $\Pi^\pm=\mathcal{F}\chi_\pm\mathcal{F}^{-1}$ is a projection in $L^2(\mathbb{R}_{\xi_n})$; here $\chi_\pm(x)=1$, if $x\in\mathbb{R}_\pm$, and $0$ otherwise, while $\mathcal{F}:L^2(\mathbb{R}_{x_n})\to L^2(\mathbb{R}_{\xi_n})$ is the Fourier transform;
\item  $a_\pm$ in \eqref{bound_symb_trans_exp} are the multiplication operators obtained by restricting $a_\pm(x',x_n,\xi',\xi_n)$ to $x_n=0$.
\item $H^\pm=\Pi^\pm S(\mathbb{R}_{\xi_n})\subset L^2(\mathbb{R}_{\xi_n})$ and $H=H^+\oplus H^- \stackrel{\mathcal{F}^{-1}}{\simeq} S(\overline{\mathbb{R}_+}) \oplus S(\overline{\mathbb{R}_-})$, where $S(\mathbb{R}), S(\overline{\mathbb{R}_\pm})$ are the Schwartz spaces on $\mathbb{R}$ and $\overline{\mathbb{R}_\pm}$ respectively; these spaces have natural Fr\'echet topologies, and we have an orthogonal decomposition $L^2(\mathbb{R}_{\xi_n})=\overline{H^+}\oplus \overline{H^-}$.

\item Moreover, in \eqref{bound_symb_trans_exp} we use the smooth compactly supported functions
$$
\begin{aligned}
&b_\pm\in C^\infty_c(S^*\mathbb{R}^{n-1},H^\mp), &
&c\in C^\infty_c(S^*\mathbb{R}^{n-1},H),\\
&g_\pm\in C^\infty_c(S^*\mathbb{R}^{n-1},H\otimes H^\mp), &
&d\in C^\infty_c(S^*\mathbb{R}^{n-1});
\end{aligned}
$$

\item  $\Pi^\pm_0:H\to\mathbb{C}$ is the functional given by $\Pi^\pm_0 u=(\mathcal{F}^{-1}u)\big|_{x_n=0\pm}$.
\end{itemize}
Formulas~\eqref{bound_symb_trans} and~\eqref{bound_symb_trans_exp} can be obtained from~\cite[Section 3.1]{ReSc1}. Indeed, in loc. cit. the following unitary isomorphism
\begin{equation*}
\begin{array}{lcl}
\mathcal{H}=L^2(\mathbb{R}^n)\oplus L^2(\mathbb{R}^{n-1})
&\stackrel{\Phi}{\simeq}
&L^2(\mathbb{R}^n_+,\mathbb{C}^2)\oplus L^2(\mathbb{R}^{n-1})\\[2mm]
(u(x',x_n),v(x'))
& \longmapsto
&(u(x',x_n),u(x',-x_n),v(x'))
\end{array}
\end{equation*}
is used to define the transmission problems in $\mathcal{H}$ as matrix Boutet de Monvel problems on the space $L^2(\mathbb{R}^n_+,\mathbb{C}^2)\oplus L^2(\mathbb{R}^{n-1})$. A computation shows that interior and boundary symbols of the operators acting in $L^2(\mathbb{R}^n_+,\mathbb{C}^2)\oplus L^2(\mathbb{R}^{n-1})$ give the interior symbols $a_\pm$ and boundary symbols~\eqref{bound_symb_trans},~\eqref{bound_symb_trans_exp}.

\subsection*{Main properties.}

\begin{prop}
Given $\mathcal{D}_1$ and $\mathcal{D}_2$ in $\Psi(Y,Z)$, we have $\mathcal{D}_1\mathcal{D}_2\in\Psi(Y,Z)$ and the following   composition formulas hold
$$
\sigma_{\rm{int}}(\mathcal{D}_1\mathcal{D}_2)=\sigma_{\rm{int}}(\mathcal{D}_1)\sigma_{\rm{int}}(\mathcal{D}_2);\quad
\sigma_Z(\mathcal{D}_1\mathcal{D}_2)=\sigma_Z(\mathcal{D}_1)\sigma_Z(\mathcal{D}_2).
$$
\end{prop}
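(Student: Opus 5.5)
The plan is to reduce the statement to the classical Boutet de Monvel calculus on a manifold with boundary, using the doubling isomorphism $\Phi$ described above. Cutting $Y$ along $Z$ gives the compact manifold with boundary $Y_Z$. Globally we identify $L^2(Y)\cong L^2(Y_Z)$, with $\partial Y_Z$ the unit conormal bundle of $Z$. Locally near $Z$, $\Phi$ identifies $\mathcal H$ with $L^2(\mathbb R^n_+,\mathbb C^2)\oplus L^2(\mathbb R^{n-1})$. Under these identifications, an operator \eqref{bdm_operator} becomes a $2\times 2$ matrix Boutet de Monvel operator of order and type zero on $Y_Z$ (or on the half-space locally), with boundary data on $\partial Y_Z$. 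The transmission property in Remark~\ref{rem1} for $D_Y$ on both sides of $Z$ becomes exactly the usual transmission property at the boundary for the symbols $a_\pm$. This is the content of the reduction in \cite[Section 3.1]{ReSc1}.

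The first step is to check that the class $\Psi(Y,Z)$ corresponds under $\Phi$ precisely to the matrix Boutet de Monvel operators of order and type zero. Away from $Z$, the operators are ordinary $\psi$DOs of order zero on $Y$. Near $Z$, we use a partition of unity together with the local formula for $\Phi$. A $\psi$DO on $Y$ with the transmission property is split into its truncations to the two sides of $Z$; its off-diagonal parts become singular Green operators on $\partial Y_Z$. These are absorbed into $G_Z$.

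The second step is to invoke the composition theorem of the Boutet de Monvel calculus: order and type zero operators form an algebra, the principal interior symbol is multiplicative, and the principal boundary symbol is multiplicative (see \cite{Bout2,Schr3,Gru3}). Transporting back by $\Phi$ gives $\mathcal D_1\mathcal D_2\in\Psi(Y,Z)$ together with $\sigma_{\rm int}(\mathcal D_1\mathcal D_2)=\sigma_{\rm int}(\mathcal D_1)\sigma_{\rm int}(\mathcal D_2)$. The interior symbol is just the pair $(a_+,a_-)$ on $S^*Y_Z$, and both factors compose pointwise.

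The step I expect to be the main obstacle is the boundary symbol. One has to verify that the boundary symbol of the doubled operator on $L^2(\mathbb R_+,\mathbb C^2)\oplus\mathbb C$ corresponds to \eqref{bound_symb_trans_exp} on $L^2(\mathbb R_{\xi_n})\oplus\mathbb C$. The identification is $L^2(\mathbb R_+)^2\cong L^2(\mathbb R)$, given by reflecting the second component, followed by the Fourier transform. Under this unitary map, $\Pi^\pm$ correspond to the projections onto the two copies of $L^2(\mathbb R_+)$. The terms $\Pi^\pm a_\pm\Pi^\pm$, the Green, trace and potential pieces then match the corresponding entries of the Boutet de Monvel boundary symbol.

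Since conjugation by a fixed, $(z,\zeta)$-independent unitary preserves products, multiplicativity of $\sigma_Z$ follows from the classical multiplicativity of the boundary symbol. The twisted homogeneity with respect to $\kappa_\lambda$ is also preserved, because the reflection commutes with $\kappa_\lambda$. Finally, we check that the construction is independent of the choice of coordinates and partition of unity. This holds because the symbols are invariantly defined in the classical calculus.
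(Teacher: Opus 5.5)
Your proposal is correct and follows essentially the same route as the paper. The paper gives no separate argument: like you, it relies on the folding isomorphism $\Phi$ of \cite[Section 3.1]{ReSc1} to view transmission problems as $\mathbb{C}^2$-valued Boutet de Monvel problems on the half-space, and then on the classical composition theorem with multiplicative interior and boundary symbols.

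Two small precisions. First, the reduction must be done in this folded local picture rather than on $Y_Z$ as a manifold with boundary, because the cross-side pieces of a $\psi$DO with the transmission property are singular Green operators only after folding. Second, $\Phi$ identifies $\Psi(Y,Z)$ not with all matrix problems but with those whose truncated $\psi$DO part is diagonal. So you should add the one-line observation that this class is closed under products, which holds because products of diagonal truncated $\psi$DOs are diagonal modulo singular Green operators.
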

We now state the change of variables formula for  operators in $\Psi(Y,Z)$.
Let $\gamma:Y\to Y$ be a diffeomorphism preserving  $Z$, i.e., $\gamma(Z)=Z$. Denote the restriction of $\gamma$ to $Z$ by $\gamma_Z$. Also consider the shift operator
\begin{equation}
\label{shift_op}
T_\gamma: \mathcal{H}\longrightarrow \mathcal{H},\quad (u,v)\longmapsto({\gamma^*}^{-1}u,{\gamma^*_Z}^{-1}v).
\end{equation}
The codifferential of $\gamma$ is denoted by
$$
\partial\gamma:T^*Y\longrightarrow T^*Y
$$
(recall that $\partial\gamma=((d\gamma)^t)^{-1}$, where $d\gamma:TY\to TY$ is the differential, while $(d\gamma)^t:T^*Y\to T^*Y$ is its dual) and it restricts to the conormal bundle $\mathcal{N}^*Z$
$$
\begin{matrix}
\partial \gamma : \mathcal{N}^*Z &\longrightarrow &\mathcal{N}^*Z\\[2mm]
\xi\in \mathcal{N}^*_z Z\subset T^*_zZ& \longmapsto & \partial\gamma(\xi)\in \mathcal{N}^*_{\gamma(z)}Z\subset T^*_{\gamma(z)}Y.
\end{matrix}
$$

\begin{prop}
\label{BdM_symb_thm}
Given $\mathcal{D}\in\Psi(Y,Z)$, we have $T_\gamma\mathcal{D}T^{-1}_\gamma\in \Psi(Y,Z)$   and
\begin{itemize}
\item its interior symbol is equal to
$
\sigma_{\rm{int}}(T_\gamma\mathcal{D}T^{-1}_\gamma)=
(\partial{\gamma)^*}^{-1}(\sigma_{\rm{int}}(\mathcal{D}));
$
\item its boundary symbol is equal to $
\sigma_Z(T_\gamma\mathcal{D}T^{-1}_\gamma)=
\widetilde{\partial\gamma_Z}^{*-1}(\sigma_Z(\mathcal{D})),
$
where the mapping $\widetilde{\partial\gamma_Z}:L^2(\mathcal{N}^*Z)\oplus\mathbb{C}\to L^2(\mathcal{N}^*Z)\oplus\mathbb{C}$ preserves the fibers and is defined as
$$
\big(x',\xi',u\in L^2(\mathcal{N}^*_{x'}Z)\oplus\mathbb{C}\big)
\longmapsto
\big(\partial\gamma_Z(x',\xi'),
{\rm{diag}}(\partial{\gamma^*}^{-1}, 1)u\subset
L^2(\mathcal{N}^*_{\gamma(x')}Z)\oplus\mathbb{C}\big).
$$
\end{itemize}
\end{prop}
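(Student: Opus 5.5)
The plan is to reduce everything to the standard diffeomorphism invariance of the Boutet de Monvel calculus on a manifold with boundary, using the reduction to $Y_Z$ already used to define $\Psi(Y,Z)$. Since $\gamma(Z)=Z$, the diffeomorphism $\gamma$ lifts to a diffeomorphism $\tilde\gamma$ of the cut manifold $Y_Z$, which maps $\partial Y_Z$ (the unit conormal bundle of $Z$) to itself and restricts there to the map induced by $\partial\gamma$ on $\mathcal{N}^*Z$. The operator $T_\gamma$ acts componentwise: on $L^2(Y)=L^2(Y_Z)$ it is $(\tilde\gamma^*)^{-1}$, and on $L^2(Z)$ it is $(\gamma_Z^*)^{-1}$. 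I will conjugate each entry of the matrix \eqref{bdm_operator} separately. The operators $T_\gamma$ are bounded but not unitary unless $\gamma$ preserves the volume forms. This is harmless, because multiplication by the smooth positive Jacobian factor lies in $\Psi(Y,Z)$ and has symbol equal to that function, which is independent of $\xi$.

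The first step is to show that each entry stays in its class. For $D_Y$, $(\tilde\gamma^*)^{-1}D_Y\tilde\gamma^*$ is again a zero-order $\psi$DO on $Y_Z$. Its principal symbol is $\sigma(D_Y)\circ(\partial\gamma)^{-1}$, which is exactly $(\partial\gamma)^{*-1}\sigma_{\rm int}(\mathcal{D})$. The transmission property is preserved under changes of coordinates respecting the boundary; this is classical (Boutet de Monvel, Grubb). Green, trace (boundary), potential (coboundary) operators and the $\psi$DO $D_Z$ on $Z$ are likewise invariant under boundary-preserving diffeomorphisms, and I will cite \cite{ReSc1,Gru3} for this. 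Together these give $T_\gamma\mathcal{D}T_\gamma^{-1}\in\Psi(Y,Z)$ together with the interior symbol formula.

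The second step, which is the real content, is the boundary symbol. I will work locally near $z\in Z$ in coordinates $(x',x_n)$ with $Z=\{x_n=0\}$, and choose coordinates near $\gamma(z)$ similarly. In these coordinates $\gamma(x',x_n)=(\gamma_Z(x')+O(x_n),\,\mu(x')x_n+O(x_n^2))$ with $\mu\neq0$. The boundary symbol at $(x',\xi')$ is computed by freezing $x'$ and taking the principal part in the normal variable. Only the linearization $x_n\mapsto\mu x_n$ survives, together with the codifferential $\partial\gamma_Z$ in $(x',\xi')$; the $O(x_n)$ tangential terms contribute lower order.

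The main obstacle is to check that a normal dilation (and possibly a reflection, if $\mu<0$) conjugates each block in \eqref{bound_symb_trans_exp} correctly. This covers $\Pi^\pm$, $a_\pm$, the Green kernels $g_\pm$, $b_\pm$, $c$ and $d$. Concretely, I must show that the map $u(\xi_n)\mapsto|\mu|^{-1/2}u(\xi_n/\mu)$, which is precisely $\partial\gamma^{*-1}$ acting on $L^2(\mathcal{N}^*_zZ)\to L^2(\mathcal{N}^*_{\gamma(z)}Z)$ up to the normalizing factor absorbed in the measure, intertwines the operator-valued symbols. A reflection swaps $\Pi^+$ and $\Pi^-$ consistently with exchanging the two sides $\pm$. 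On the $\mathbb{C}$ component the map acts as $1$. Finally, since $\sigma_Z$ is multiplicative and $\kappa_\lambda$-equivariant (twisted homogeneity), it suffices to verify the formula on generators. I will check this via the explicit formulas, using $\mathcal{F}$-conjugation of $x_n$-dilations into $\xi_n$-dilations.
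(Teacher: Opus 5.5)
The paper gives no proof of this proposition; it is quoted from the Boutet de Monvel calculus. Your reduction to diffeomorphism invariance on $Y_Z$ is therefore the natural route, and your first step is fine.

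The gap is in the second step. There you assert that the $O(x_n)$ tangential terms of $\gamma$ contribute only lower order, and that is false for the term linear in $x_n$. Write $\gamma(x',x_n)=(\gamma_Z(x')+x_nh(x')+O(x_n^2),\ \mu(x')x_n+O(x_n^2))$. Then $h$ is part of $d\gamma|_Z$, and at $x_n=0$
\[
(\partial\gamma)^{-1}(\eta',\eta_n)=\bigl((\partial\gamma_Z)^{-1}\eta',\ \mu(x')\eta_n+\langle h(x'),\eta'\rangle\bigr).
\]
Now take your own interior symbol $\sigma(D_Y)\circ(\partial\gamma)^{-1}$ and restrict it to $Z$; by Proposition~\ref{symb_trans_prop} this is what the blocks $\Pi^\pm a_\pm\Pi^\pm$ of the boundary symbol see. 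It arises from $a_\pm$ by an \emph{affine} substitution in the normal covariable. The shift is homogeneous of degree one in $\eta'$, hence of principal order.

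A concrete case: take $\mathbb{R}^2/\mathbb{Z}^2$ with $Z=\{x_2=0\}$ and the shear $\gamma(x_1,x_2)=(x_1+x_2,x_2)$. Here $\gamma_Z=\mathrm{id}$ and $\partial\gamma|_{\mathcal{N}^*Z}=\mathrm{id}$, so the formula you set out to verify predicts $\sigma_Z(T_\gamma AT_\gamma^{-1})=\sigma_Z(A)$. But take $A=a(D)$ with $a(\xi)=\xi_1^2/|\xi|^2$, which has the transmission property. The conjugated symbol is $a(\eta_1,\eta_1+\eta_2)$, so at $\eta_1=1$ you get multiplication by $(1+(1+\eta_2)^2)^{-1}$ rather than $(1+\eta_2^2)^{-1}$. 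Your planned block-by-block check therefore fails whenever $h\not\equiv0$.

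The missing idea is to keep this term rather than discard it. A translation $u(\xi_n)\mapsto u(\xi_n+c)$ is multiplication by $e^{\pm icx_n}$ in the $x_n$-picture. It commutes with $\Pi^\pm$, preserves $H^\pm$ and leaves the traces $\Pi^\pm_0$ unchanged. Hence all blocks of \eqref{bound_symb_trans_exp} transform consistently, and $T_\gamma\mathcal{D}T_\gamma^{-1}$ does have a boundary symbol of the right form. That symbol is the conjugate of $\sigma_Z(\mathcal{D})$ by the operator induced by the affine map $\partial\gamma$ between the fibres $\{\xi\in T^*_{x'}Y:\ \xi|_{T_{x'}Z}=\xi'\}$. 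These fibres are identified with $\mathcal{N}^*Z$ through whatever splitting $T^*Y|_Z\cong T^*Z\oplus\mathcal{N}^*Z$ is used to define $\sigma_Z$ globally. This agrees with the linear map $\partial\gamma|_{\mathcal{N}^*Z}$ in the statement exactly when $h\equiv0$, i.e. when $d\gamma$ preserves the chosen normal direction along $Z$. That holds for the shifts and dilations used later in the paper. So you must either add this hypothesis or read $\widetilde{\partial\gamma_Z}$ as the affine fibre map.

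Finally, check the direction of your dilation. The map $u\mapsto|\mu|^{-1/2}u(\cdot/\mu)$ is the $x_n$-side action of $T_\gamma$. On the $\xi_n$-side, $(\partial\gamma^*)^{-1}$ acts by $u\mapsto u(\mu\,\cdot)$ up to normalization. Conjugating with your map would produce $a(\eta_n/\mu)$ instead of $a(\mu\eta_n)$.
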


\begin{prop}
\label{symb_trans_prop}
If $A\in\Psi(Y)$ is a $\psi$DO with the transmission property on $Z\subset Y$, then $A\in\Psi(Y,Z)$ and its interior and boundary symbols are equal to $\sigma_{\rm{int}}(A)=\sigma(A)$, and $\sigma_Z(A)=\sigma(A)|_{Z}$.
\end{prop}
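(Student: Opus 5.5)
The plan is to realize $A$ as an element of the form \eqref{bdm_operator} with $G_Z=0$, $B_Z=0$, $C_Z=0$, $D_Z=0$, that is $\mathcal{D}=\operatorname{diag}(A,0)$ acting on $\mathcal{H}$, and then read off both symbols from the local formula \eqref{bound_symb_trans_exp}. Since membership in $\Psi(Y,Z)$ and both symbols are defined locally, I will use a partition of unity. Away from $Z$ the statement is classical: $A$ is an ordinary $\psi$DO on the interior of $Y_Z$ and its interior symbol is $\sigma(A)$. Near a point of $Z$ I will use coordinates $(x',x_n)$ with $Z=\{x_n=0\}$. The off-diagonal pieces $\varphi A\psi$ with disjoint supports are smoothing, and smoothing operators lie in $\Psi(Y,Z)$ (as singular Green operators of order $-\infty$) with zero symbols. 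This reduces everything to $Y=\mathbb{R}^n$, $Z=\mathbb{R}^{n-1}$, with the symbol of $A$ compactly supported in $x$.

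In the local model, I pass through the isomorphism $\Phi$ to $L^2(\mathbb{R}^n_+,\mathbb{C}^2)\oplus L^2(\mathbb{R}^{n-1})$. There the operator $Au$, restricted to $x_n>0$ and to $x_n<0$ (reflected), becomes a $2\times 2$ matrix:
\[
\begin{pmatrix} r^+Ae^+ & r^+AJe^+\\ r^+JAe^+ & r^+JAJe^+\end{pmatrix},
\]
where $e^+$ is extension by zero, $r^+$ is restriction and $J$ is the reflection $x_n\mapsto -x_n$.

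The diagonal entries are truncated $\psi$DOs $(JAJ)_+$ and $A_+$. The transmission property of $A$ at $Z$, as formulated in Remark~\ref{rem1}, implies the transmission property of $JAJ$, since reflection changes the symbol to $a(x',-x_n,\xi',-\xi_n)$. So these entries belong to the Boutet de Monvel calculus, with interior symbols equal to the restrictions of $\sigma(A)$ to the two sides. This gives $\sigma_{\rm int}(A)=\sigma(A)$ on $S^*Y_Z$.

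The off-diagonal entries $r^+AJe^+$ are the step where I expect the real work. I would show they are singular Green operators of order zero and type zero. This is the standard fact that, for $P$ with the transmission property, $r^+Pe^-$ (equivalently $r^+PJe^+$ composed with $J$) is a singular Green operator. It is proved via the decomposition $L_+(P,Q)=(PQ)_+-P_+Q_+$ and the explicit Wiener--Hopf structure of the symbol in $\xi_n$. I would cite this from \cite{ReSc1} or \cite{Gru3} rather than reprove it.

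It then remains to identify the boundary symbol. Undoing $\Phi$ on the boundary symbol level, the operator $\operatorname{diag}(A,0)$ has boundary symbol of the form \eqref{bound_symb_trans_exp} with $b=c=d=0$. Its upper-left entry is the compression of the full-line multiplication operator $a(x',0,\xi',\xi_n)$ on $L^2(\mathbb{R}_{\xi_n})=\overline{H^+}\oplus\overline{H^-}$, namely
\[
\Pi^+a\Pi^++\Pi^-a\Pi^-+\Pi^+a\Pi^-+\Pi^-a\Pi^+ ,
\]
and this equals the multiplication by $a$ itself. The cross terms $\Pi^\pm a\Pi^\mp$ are precisely the boundary symbols of the Green parts $g_\pm$ coming from $r^\pm Ae^\mp$. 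Hence $\sigma_Z(A)$ is the operator of multiplication by $\sigma(A)|_{x_n=0}$ on $L^2(\mathcal{N}^*_{x'}Z)$, extended by $0$ on $\mathbb{C}$, which is what $\sigma(A)|_Z$ means.

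The main obstacle is to check that the cross terms $\Pi^\pm a\Pi^\mp$ are of the admissible form $\Pi^\pm_{0,\eta_n}g_\pm\Pi^\mp$ with kernels in $H\otimes H^\mp$. This is where the transmission property is essential, and I will take it from the cited calculus.
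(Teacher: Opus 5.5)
Your proposal is correct and is essentially the paper's argument. The paper takes $A\in\Psi(Y,Z)$ and $\sigma_{\rm int}(A)=\sigma(A)$ from the calculus of \cite[Section 3.1]{ReSc1} (via the reflection isomorphism $\Phi$), and verifies $\sigma_Z(A)=\sigma(A)|_Z$ exactly as you do, by writing $a=(\Pi^++\Pi^-)a(\Pi^++\Pi^-)$ and recognizing the cross terms as Green symbols through the identity $\Pi^\mp a\Pi^\pm=\Pi^\pm_{0,\eta_n}\frac{a^\mp(\xi_n)-a^\mp(\eta_n)}{i(\xi_n-\eta_n)}\Pi^\pm$, $a^\pm=\Pi^\pm a$, cited from \cite[Section 2.1.2.3, proof of Lemma 5]{ReSc1}. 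That identity is exactly your ``main obstacle'', and your operator-level discussion of $r^+AJe^+$ only makes explicit what the paper leaves to the cited calculus.
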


Let us show that the boundary symbol $\sigma_Z(A)=\sigma(A)|_{Z}$ has the form~\eqref{bound_symb_trans_exp}. We have the equality
\begin{multline}\label{multa1}
a=(\Pi^+ + \Pi^-)a(\Pi^+ + \Pi^-)=\Pi^+a\Pi^+ + \Pi^-a\Pi^- + \Pi^-a\Pi^+ + \Pi^+a\Pi^-
\\
=\Pi^+a\Pi^+ + \Pi^-a\Pi^- +
\Pi^+_{0,\eta_n}\frac{a^-(\xi_n)-a^-(\eta_n)}{i(\xi_n-\eta_n)}\Pi^+ +
\Pi^-_{0,\eta_n}\frac{a^+(\xi_n)-a^+(\eta_n)}{i(\xi_n-\eta_n)}\Pi^-
\end{multline}
of operators acting in $L^2(\mathbb{R})$, where we consider functions $a=\sigma(A)$, $a^\pm=\Pi^\pm a$. Also, the last equality in \eqref{multa1} follows from the identities
$$
\Pi^-a\Pi^+=\Pi^+_{0,\eta_n}\frac{a^-(\xi_n)-a^-(\eta_n)}{i(\xi_n-\eta_n)}\Pi^+,\quad
\Pi^+a\Pi^-=\Pi^-_{0,\eta_n}\frac{a^+(\xi_n)-a^+(\eta_n)}{i(\xi_n-\eta_n)}\Pi^-
$$
proved in~\cite[Section 2.1.2.3, proof of Lemma 5]{ReSc1}, see also~\cite{Gru3}.

\begin{prop}
Given $\mathcal{D}\in\Psi(Y,Z)$, we have the equality
\begin{equation}
\label{estimation_D}
\inf\limits_{K\in \mathcal{K}}\|\mathcal{D}+K\|_{\mathcal{BH}}=
\max\left(
\sup\limits_{(x,\xi)\in S^*Y_Z}|\sigma_{\rm{int}}(\mathcal{D})(x,\xi)|,
\sup\limits_{(x',\xi')\in S^*Z}\|\sigma_Z(\mathcal{D})(x',\xi')\|
\right)
\end{equation}
(norm modulo compact operators), where $\mathcal{K}\subset \mathcal{B(H)}$ is the ideal of compact operators.
\end{prop}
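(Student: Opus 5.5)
The plan is to reduce the statement to the classical norm estimate for the Boutet de Monvel algebra on a manifold with boundary, using the unitary isomorphism $\Phi$ introduced above. Localizing near $Z$ and doubling across $Z$ (via $\Phi$ in local charts, glued by a partition of unity), $\mathcal{H}$ is unitarily equivalent to $L^2(Y_Z)\oplus L^2(Z)$. Under this equivalence, $\Psi(Y,Z)$ goes into the standard Boutet de Monvel algebra of order and type zero on the compact manifold with boundary $Y_Z$, with boundary $\partial Y_Z$ a double cover of $Z$ (trivial in the trivial-normal-bundle case, in general the unit conormal bundle). The interior and boundary symbols correspond, as already stated after~\eqref{bound_symb_trans_exp}. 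For the standard Boutet de Monvel algebra, the essential norm formula is known: the quotient of its $C^*$-closure by the compacts embeds isometrically into $C(S^*Y_Z)\oplus C(S^*\partial Y_Z,\mathcal{B}(L^2(\mathbb{R}_+)\oplus\mathbb{C}))$, suitably fibered over the compatibility condition. I would cite this from \cite{ReSc1} (see also \cite{Schr3}). Transporting it back through $\Phi$ gives~\eqref{estimation_D}.

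If one prefers a direct argument, I would prove the two inequalities separately.

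For the lower bound $\ge$, I would take a point $(x,\xi)\in S^*Y_Z$ with $x\notin Z$. Using a sequence of oscillating test functions $u_k=e^{ik\langle x,\xi\rangle}\varphi_k$, with supports concentrating at $x$ and converging weakly to $0$, one gets $\|(\mathcal{D}+K)u_k\|\to|\sigma_{\rm int}(\mathcal{D})(x,\xi)|$. Continuity extends this to the boundary points of $S^*Y_Z$. For $(x',\xi')\in S^*Z$, I would use test functions $e^{ik\langle x',\xi'\rangle}\varphi(k^{1/2}(y'-x'))\,k^{1/2}w(kx_n)$, paired with the corresponding $L^2(Z)$ component built from $v$. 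This realizes $\|\sigma_Z(\mathcal{D})(x',\xi')(w,v)\|$ in the limit, thanks to the twisted homogeneity via $\kappa_\lambda$ from~\eqref{kappalambda}. Since $K$ is compact and $u_k\rightharpoonup0$, the compact term drops out.

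For the upper bound $\le$, I would let $N$ denote the right-hand side and, for $\varepsilon>0$, build a parametrix-type square root. Both $N^2+\varepsilon-\sigma_{\rm int}(\mathcal{D}^*\mathcal{D})$ and $N^2+\varepsilon-\sigma_Z(\mathcal{D}^*\mathcal{D})$ are positive invertible symbols. Because the Boutet de Monvel algebra is closed under holomorphic functional calculus at the symbol level (spectral invariance), their square roots are symbols of some $\mathcal{B}\in\Psi(Y,Z)$. By the composition formula of the first Proposition, $N^2+\varepsilon-\mathcal{D}^*\mathcal{D}-\mathcal{B}^*\mathcal{B}$ then has vanishing interior and boundary symbols, so it is of negative order and compact. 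Hence $\|\mathcal{D}\|^2_{\rm ess}\le N^2+\varepsilon$.

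The main obstacle is this square-root step. It needs the symbolic calculus to be spectrally invariant, and the compatibility between the two symbols — the boundary symbol's ``interior part'' at $x_n=0$ must agree with $\sigma_{\rm int}$ — must be preserved under taking square roots. I would handle this by noting that the symbol algebra is a $C^*$-subalgebra defined by that compatibility condition, so continuous functional calculus stays inside it. The smooth representative with the transmission property is then obtained by approximation plus a Neumann-series correction.
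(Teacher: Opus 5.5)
Your main route is correct and is essentially the paper's: the paper gives no independent proof of this estimate but imports it from the standard Boutet de Monvel theory \cite[Section~3.1]{ReSc1}, transported to $\mathcal{H}$ through the doubling isomorphism $\Phi$, just as you propose. Your optional direct argument is the usual one (oscillating, concentrating test functions for the lower bound and a symbolic square root for the upper bound); note that its upper bound rests on the spectral invariance of the boundary symbol calculus, which is a genuine theorem of that calculus and does not follow from continuous functional calculus plus approximation alone.
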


\subsection*{Remark.}
In what follows, we also consider the case, when $Y$ is noncompact.
In this case we consider operators on $\mathcal{H}$ equal to sums of scalar operators and transmission problems with compactly supported Schwartz kernels.

\section{$\Gamma$-Boutet de Monvel transmission problems}\label{sec3}

\subsection*{Manifolds.}
Let $Y$ be a smooth manifold without boundary (noncompact, in general) and $\Gamma\subset{\rm Diff}(Y)$ be a discrete finitely generated group of diffeomorphisms of $Y$. The elements $\gamma\in\Gamma$ are diffeomorphisms denoted by $\gamma:Y\to Y$. We fix a smooth compact submanifold $Z\subset Y$ of codimension one. The partition of $Z$ into connected components is denoted by $Z = \bigcup\limits_{j\in J}Z_j$.

In the sequel, we suppose that the following regularity condition is satisfied.

\begin{cond}
\label{cond1}
Any two submanifolds from the set $\gamma(Z_j), \gamma\in\Gamma, j\in J$ either coincide or are disjoint.
\end{cond}

\begin{example}
Let $Y=\mathbb{S}^1\times\mathbb{R}$ be an infinite cylinder with the coordinates $x_1,x_2$, $Z=\mathbb{S}^1\times\{0\}$, $\Gamma=\mathbb{Z}$ be the group of shifts by $k\tau$, $k\in\mathbb{Z}$, where $\tau > 0$ is a constant. In this case Condition~\ref{cond1} holds.
\end{example}

\begin{example}
Let $Y=\mathbb{R}^d$ and $Z=\mathbb{S}^{d-1}$ be the unit sphere with center at the origin. The group $\Gamma=\mathbb{Z}$ acts by dilations $x\mapsto q^k x$ for a fixed number $q>1$. In this case Condition~\ref{cond1} holds.
More generally one can consider subgroups $\Gamma\subset O(d)\times\mathbb{R}_+$.
\end{example}

\begin{example}
Take $Y=\mathbb{S}^1\times\mathbb{R}$, $Z=\mathbb{S}^1\times\{0\}\cup\mathbb{S}^1\times\{1\}$, $\Gamma=\mathbb{Z}$ acts as $(x_1,x_2)\mapsto (x_1,q^k x_2)$ for a fixed number $q>1$. In this example Condition~\ref{cond1} also holds.
\end{example}

\begin{example}
Take $Y=\mathbb{S}^1\times\mathbb{S}^1$ with coordinates $(x_1,x_2)$, $Z=\mathbb{S}^1\times\{0\}$, $\Gamma=\mathbb{Z}$ be a group of shifts $(x_1, x_2) \mapsto (x_1, x_2+k), k\in\mathbb{Z}, 0\leq x_1,x_2\leq 2\pi$. Then the set $\Gamma(Z)$ is everywhere dense in $Y$. Condition~\ref{cond1} is satisfied in this example.
\end{example}

\subsection*{Algebras of smooth functions.} Define the subset $X=\Gamma (Z)=\bigcup\limits_{\gamma\in\Gamma}\gamma(Z)\subset Y$. This set is a countable union of disjoint submanifolds of codimension one. We fix a word metric $|\gamma|$ on $\Gamma$ and consider the filtration
\begin{equation}
\label{filtration}
X_1\subset X_2\subset\dots \subset X,\text{ where }X_N=\bigcup\limits_{|\gamma|\leq N}{\gamma(Z)}.
\end{equation}
Then $X$ is a smooth manifold with the following topology: $X$ is represented as a disjoint union of different submanifolds $\gamma(Z_j), j\in J, \gamma\in\Gamma$. In other words, a set $U \subset X$ is open if and only if $U \cap X_N$ is open for each $N$. Denote by $C^\infty_c(X)$ the algebra of smooth compactly supported functions on $X$.

On the complement $Y\setminus X_N$, consider the smooth functions, which are smooth up to the submanifold $X_N\subset Y$ from each side of this submanifold.
These functions form an algebra, canonically isomorphic to the algebra $C^\infty(Y_N)$ of smooth functions on the manifold with boundary $Y_N$, obtained from $Y$ by cutting along the submanifold $X_N$ ($Y_N$ is often called the {\it blowup} of $Y$ along $X_N$). We have natural projections $Y_k\to Y_{k'}$ whenever $k>k'$ and the corresponding embeddings
$$
C^\infty_c(Y)\subset C^\infty_c(Y_1)\subset C^\infty_c(Y_2)\subset \dots \subset
C^\infty_c(Y_\infty)\stackrel{\rm{def}}{=}\bigcup\limits_{k}C^\infty_c(Y_k).
$$
The action of $\Gamma$ on $Y$ induces the action of this group on $C^\infty_c(Y_\infty)$ by shift operators (cf.~\eqref{shift_op}).

\subsection*{$L^2$-spaces and unitary shift operators.} We fix a Riemannian metric on $Y$, consider the induced Riemannian metric on $X$, and the corresponding volume forms ${\rm vol}_Y, {\rm vol}_X$ on $Y$ and $X$ respectively. Moreover we define the Hilbert space
$$
\mathcal{H}=L^2(Y,{\rm vol}_Y)\oplus L^2(X,{\rm vol}_X)
$$
using these volume forms.
We have $C^\infty_c(Y_\infty)\subset \mathcal{B}(L^2(Y,{\rm vol}_Y))$.

We introduce the unitary representation $\rho$ of $\Gamma$
\begin{equation}
\label{unit_repres}
\begin{matrix}
\rho:\Gamma&\longrightarrow& \mathcal{B}( \mathcal{H})\\[2mm]
\gamma&\longmapsto&\widetilde{T}_\gamma(u,v)=
(J^{1/2}_{\gamma,Y}T_\gamma u, J^{1/2}_{\gamma,X}T_\gamma v),
\end{matrix}
\end{equation}
where the Jacobians $J_{\gamma,Y}\in C^\infty(Y)$, $J_{\gamma,X}\in C^\infty(X)$ are equal to
$$
J_{\gamma,W}=\frac{{\gamma^*}^{-1}{\rm vol}_Y}{{\rm vol}_Y},\qquad
J_{\gamma,X}=\frac{{\gamma^*}^{-1}{\rm vol}_X}{{\rm vol}_X}.
$$
The unitarity of representation \eqref{unit_repres} can be checked by a direct computation.

\subsection*{Boutet de Monvel operators for the pair $(Y,X)$.}
Given $j\ge 1$, we denote by $\Psi_X(Y,X_j)\subset \Psi(Y,X_j)$ the subalgebra of operators such that their main operators have the transmission property on $X\setminus X_j$. Then the sequence \eqref{filtration} induces a sequence of Boutet de Monvel algebras
$$
\Psi_X(Y,X_1)\subset \Psi_X(Y,X_2)\subset \cdots \subset \Psi(Y,X)\stackrel{def}{=}\bigcup\limits_{j\geq1}\Psi_X(Y,X_j).
$$
Here we used Proposition~\ref{symb_trans_prop}.

Consider the commutative $C^*$-algebra in $L^\infty(S^*Y)$ obtained as the closure of the subalgebra of interior symbols of operators in $\Psi(Y,X)$. By Gelfand's theorem this algebra is the algebra of continuous functions vanishing at infinity on a locally compact Hausdorff space denoted by $S^\vee Y_\infty$. We use ``$\vee$'' in the notation to emphasize that interior symbols are required to have the transmission property on $X$. The one-point compactification of this space is denoted by $S^\vee Y^+_\infty$.

Consider the  union $S^\vee Y^+_\infty\sqcup S^*X$ of disjoint sets. We endow this union with the following topology: the closure $cl(U\sqcup V)$ of a set
$$
U\sqcup V\subset S^{\vee}Y^+_\infty\sqcup S^*X,\quad\text{where}\quad
U\subset S^\vee Y^+_\infty\quad\text{and}\quad
V\subset S^*X,
$$
is equal to the set
\begin{equation}
\label{clos_un}
cl(U\sqcup V) = (\overline{U}\cup\overline{V_Y})\sqcup\overline{V},
\end{equation}
where $\overline{U},\overline{V}$ stand for the closures in the subsets $S^{\vee}Y^+_\infty$ and $S^*X$, respectively,  while
\begin{equation}
\label{V_Y_def}
V_Y=\{
(x',0,(\cos\varphi)\xi',(\sin\varphi)\xi_n)\in S^\vee Y_\infty\quad|\quad (x',\xi')\in{V},\quad \varphi\in[-\pi/2,\pi/2]
\}.
\end{equation}
%
%
We obtain a topological space denoted by $\mathfrak{Y}_X$. Note that this space is not Hausdorff.

\subsection*{$\Gamma$-Boutet de Monvel operators for the pair $(Y,X)$ and their trajectory symbols.}
Let $\Gamma$ act on the algebras $\Psi(Y,X)$ and $C_c^\infty(Y_\infty)\oplus C_c^\infty(X)$ by automorphisms. We denote the corresponding algebraic crossed products by $\Psi(Y,X){\rtimes}_{\rm{alg}}\Gamma$ and $(C_c^\infty(Y_\infty)\oplus C_c^\infty(X))	{\rtimes}_{\rm{alg}}\Gamma$, respectively.
An element $D\in\Psi(Y,X){\rtimes}_{\rm{alg}}\Gamma$ is by definition a compactly supported function $\Gamma\to\Psi(Y,X), \gamma\mapsto D_\gamma.$ We assign to this function the following bounded operator
\begin{equation}
\label{DT_gamma}
D=\sum\limits_\gamma{D_\gamma\widetilde{T}_\gamma}:\mathcal{H}\longrightarrow\mathcal{H}.
\end{equation}
The operator~\eqref{DT_gamma} is called a {\it  $\Gamma$-Boutet de Monvel operator for the pair $(Y,X)$}.

We define the {\it interior trajectory symbol}
\begin{align}
\label{Gamma_BdM_int_traj_symb}
&\sigma_{\rm{int}}(D)(x,\xi)\colon
l^2(\Gamma)\longrightarrow l^2(\Gamma),\quad(x,\xi) \in
S^*Y^+_\infty,
\intertext{and the \textit{boundary trajectory symbol}}
\label{Gamma_BdM_boun_traj_symb}
&\sigma_X(D)(x',\xi')\colon l^2(\Gamma,
L^2(\mathcal{N}^*_{x'}X) \oplus
\mathbb{C})\longrightarrow l^2(\Gamma,
L^2(\mathcal{N}^*_{x'}X) \oplus \mathbb{C}),\quad
(x',\xi')\in S^*X,
\end{align}
of the operator~\eqref{DT_gamma} as
\begin{align}\notag
[\sigma_{\rm{int}}(D)(x,\xi)](w)(\gamma_1)
&=\sum\limits_\gamma\sigma_{\rm{int}}
(D_\gamma)((\partial\gamma_1)^{-1}(x,\xi))
w(\gamma_1\gamma),\quad w\in l^2(\Gamma),
\\ \label{eq-bnd-trj1}
[\sigma_X(D)(x',\xi')](w)(\gamma_1)
&=\sum\limits_\gamma
[{\widetilde{\widetilde{\partial\gamma_1}}}^{*^{-1}}
(\sigma_X(D_\gamma))](x',\xi') w(\gamma_1\gamma),
\\ \notag &\qquad\qquad w\in l^2(\Gamma,
L^2(\mathcal{N}^*_{x'}X)\oplus\mathbb{C}),
\end{align}
where $\widetilde{\widetilde{\partial\gamma}}:L^2(\mathcal{N}^*X) \oplus \mathbb{C}\to L^2(\mathcal{N}^*X)\oplus \mathbb{C}$ is equal to (cf.~Proposition~\ref{BdM_symb_thm})
$$
\widetilde{\widetilde{\partial\gamma}}(x',\xi',u) =(\partial\gamma(x',\xi'),{\rm diag}(J_N^{1/2}(\partial\gamma)^{*^{-1}},1)u), \qquad J_N=\frac{(\partial\gamma)^{*^{-1}} \operatorname{vol}_{\mathcal{N}^*X}}{\operatorname{vol}_{\mathcal{N}^*X}}.
$$
It follows from the change of variables formula for Boutet de Monvel operators (see Proposition~\ref{BdM_symb_thm}) that the trajectory symbols~\eqref{Gamma_BdM_int_traj_symb} and~\eqref{Gamma_BdM_boun_traj_symb} define homomorphisms of algebras
\begin{align}
\label{int_traj_symb_alg_hom}
\sigma_{\rm{int}}(x,\xi):\Psi(Y,X){\rtimes}_{\rm{alg}}\Gamma &\longrightarrow  \mathcal{B}(l^2(\Gamma))
\\[2mm]
\label{boun_traj_symb_alg_hom}
\sigma_X(x',\xi'):\Psi(Y,X){\rtimes}_{\rm{alg}}\Gamma& \longrightarrow \mathcal{B}(l^2(\Gamma, L^2(\mathcal{N}^*_{x'}X)\oplus\mathbb{C}))
\end{align}
for all $(x,\xi)\in S^*Y^+_\infty$, $(x',\xi')\in S^*X$.

\subsection*{Ellipticity and Fredholm property.}

Consider triples of the form $\mathcal{D}=(D, P_1, P_2)$, where
$$
D\in{\rm{Mat}}_N(\Psi(Y,X){\rtimes}_{\rm{alg}}\Gamma),\qquad
P_{1,2}\in{\rm{Mat}}_N((C^\infty_c(Y_\infty)\oplus C^\infty_c(X))^+{\rtimes}_{\rm{alg}}\Gamma).
$$
Here $P_{1,2}$ are projections $(P_j^2=P_j)$, and we consider the unitization of $C^\infty_c(Y_\infty)\oplus C^\infty_c(X)$. We consider matrix operators over the corresponding algebras. Moreover, we suppose that the following equality holds:
$D=P_2DP_1$. This implies that $D$ restricts to the operator
\begin{equation}
\label{D_restr}
D\colon P_1(\mathcal{H}\otimes\mathbb{C}^N)\longrightarrow P_2(\mathcal{H}\otimes\mathbb{C}^N),
\end{equation}
acting between the ranges of $P_1$ and $P_2$. We call the operator \eqref{D_restr} a $\Gamma$-{\em{pseu\-do\-dif\-fe\-ren\-ti\-al operator}} or shortly a $\Gamma$-{\em{operator}}. Let us give the ellipticity conditions for $\Gamma$-operators, which guarantee their Fredholm property.

\begin{definition}
A triple $(D, P_1, P_2)$ is {\em trajectory elliptic}, if the following two conditions are satisfied:
\begin{enumerate}
\item[1)] the interior trajectory symbol (see~\eqref{int_traj_symb_alg_hom})
induces an invertible operator
\begin{equation}
\label{traj_ell_in}
\sigma_{\rm{int}}(D)(m):
P_1(m) l^2(\Gamma,\mathbb{C}^N) \longrightarrow
P_2(m) l^2(\Gamma,\mathbb{C}^N)
\end{equation}
for all $m\in S^*Y^+_\infty$; here $P_j(m)$ is the interior trajectory symbol of $P_j$;
\item[2)] the boundary trajectory symbol (see~\eqref{boun_traj_symb_alg_hom})
induces an invertible operator
\begin{multline}
\label{traj_ell_b}
\sigma_X(D)(m'):
P_1(m') l^2(\Gamma,(L^2( \mathcal{N}^*_{m'}X)\oplus\mathbb{C})\otimes\mathbb{C}^N) \longrightarrow \\
P_2(m') l^2(\Gamma,(L^2( \mathcal{N}^*_{m'}X)\oplus\mathbb{C})\otimes\mathbb{C}^N)
\end{multline}
for all $m'\in S^*X$; here $P_j(m')$ is the boundary trajectory symbol of $P_j$.
\end{enumerate}
\end{definition}

Recall that an action of a finitely generated group $\Gamma$ by homeomorphisms  on a topological space $W$ is topologically free if for any finite collection of group elements $\gamma_1,\dots,\gamma_N\in\Gamma\setminus\{e\}$ the union of their fixed point sets  does not contain a nonempty open set in $W$.

\begin{theorem}
\label{tr_kon}
Suppose that $\Gamma$ is amenable and its action on the space $\mathfrak{Y}_X$  is topologically free (the topology on this space is defined in~\eqref{clos_un}). Then the operator~\eqref{D_restr} has the Fredholm property if and only if the triple $(D, P_1, P_2)$ is trajectory elliptic.
\end{theorem}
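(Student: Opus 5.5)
The plan is to reduce the Fredholm question to invertibility in a Calkin-type quotient and then identify that quotient with a crossed product. Let $\mathcal{A}$ be the norm closure of $\Psi(Y,X)$ in $\mathcal{B}(\mathcal{H})$, and let $\mathcal{C}_\Gamma$ be the closure of $\Psi(Y,X)\rtimes_{\rm alg}\Gamma$ realized through \eqref{DT_gamma}. Both contain the compact operators $\mathcal{K}$ (for $\mathcal{A}$ this comes from the Green/trace parts with compactly supported kernels). By Atkinson's theorem, \eqref{D_restr} is Fredholm iff the image of $D$ is invertible in $P_2(\mathcal{C}_\Gamma/\mathcal{K})P_1$, which is a corner of $\mathrm{Mat}_N(\mathcal{C}_\Gamma/\mathcal{K})$. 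The steps are as follows.

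First, we show that the symbol algebra $\mathcal{A}/\mathcal{K}$ is commutative modulo the appropriate ideal and compute its spectrum. Using \eqref{estimation_D} applied to $\Psi_X(Y,X_j)$ for each $j$, and passing to the inductive limit over the filtration \eqref{filtration}, we obtain an injective $*$-homomorphism
\[
\mathcal{A}/\mathcal{K}\to C_0(S^\vee Y_\infty)\oplus C_0(S^*X,\mathcal{B}(L^2(\mathcal{N}^*X)\oplus\mathbb{C})).
\]
This map is isometric by \eqref{estimation_D}, because the sup norms are compatible along the filtration. The image is a type I algebra with two families of irreducible representations:
\begin{itemize}
\item point evaluations at $S^\vee Y_\infty$;
\item the boundary symbol representations at $S^*X$, which are irreducible on $L^2(\mathcal{N}^*_{x'}X)\oplus\mathbb{C}$ since the image contains all Wiener--Hopf plus Green symbols, and these act irreducibly.
\end{itemize}
Finally, the boundary representation at $(x',\xi')$, composed with its ideal of compact-valued (Green) symbols, has quotient given by the Wiener--Hopf symbol $a_\pm(x',0,\xi',\xi_n)$. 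This accounts precisely for the closure relation \eqref{clos_un}, \eqref{V_Y_def}. Hence $\widehat{\mathcal{A}/\mathcal{K}}\cong\mathfrak{Y}_X$ (minus the point at infinity), $\Gamma$-equivariantly by Proposition~\ref{BdM_symb_thm}.

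Second, we identify $\mathcal{C}_\Gamma/\mathcal{K}$ with the crossed product $(\mathcal{A}/\mathcal{K})\rtimes\Gamma$. We use amenability so that full and reduced crossed products coincide, together with the standard argument (via a conditional expectation and the fact that the representation $\rho$ is "essentially free" on the symbol level) showing that the natural surjection $(\mathcal{A}/\mathcal{K})\rtimes\Gamma\to\mathcal{C}_\Gamma/\mathcal{K}$ is isometric. This is exactly the isomorphism theorem of Antonevich–Lebedev \cite{AnLe2}, applied in the form quoted in the appendix: for topologically free actions on the primitive spectrum, the regular representation is faithful and any covariant representation satisfying the norm condition is isometric.

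Third, we apply the appendix's invertibility criterion. In $(\mathcal{A}/\mathcal{K})\rtimes\Gamma$, an element is invertible iff its images under the induced regular representations $\mathrm{Ind}\,\pi_m$, for $m$ ranging over $\mathrm{Prim}=\mathfrak{Y}_X$, are invertible. This holds by amenability and topological freeness; the latter ensures that the family of orbit representations is faithful, and hence sufficient because the algebra is then realized as a $C^*$-subalgebra of a product of these. Computing $\mathrm{Ind}\,\pi_m$ for point evaluations gives exactly $\sigma_{\rm int}(D)(m)$ on $l^2(\Gamma)$, and for boundary points gives $\sigma_X(D)(m')$. This computation is a direct check of formulas \eqref{eq-bnd-trj1} via Proposition~\ref{BdM_symb_thm}, with the Jacobian factors coming from $\widetilde T_\gamma$. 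The point at infinity of $S^\vee Y^+_\infty$ handles the unitization and the scalar parts, and the corner by $P_1,P_2$ passes through because the representations are applied entrywise. This yields the equivalence.

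The main obstacle is the first step, namely the precise identification of the spectrum of the closure of $\Psi(Y,X)$ modulo compacts with the non-Hausdorff space $\mathfrak{Y}_X$ when the $\gamma(Z)$ accumulate. One must check that symbols smooth up to every $X_N$ have a closure whose Gelfand space $S^\vee Y_\infty$ glues correctly to the boundary points. I would first prove the statement for each $\Psi_X(Y,X_j)$, where it is classical, and then take inductive limits of spectra, verifying that the hull–kernel topology yields \eqref{clos_un}.
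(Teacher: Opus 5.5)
Your route is the paper's. You identify $\Prim(\overline{\Psi(Y,X)}/\mathcal{K})$ with $\mathfrak{Y}_X$ through the ideal of compact-valued boundary symbols and the commutative quotient $C(S^\vee Y^+_\infty)$. The paper proves exactness of $0\to C_0(S^*X,\mathcal K)\to\Sigma\to C(S^\vee Y^+_\infty)\to 0$ directly for the inductive limit and reads off $\Prim\Sigma$ from this composition series, rather than taking limits of spectra. You then use amenability and topological freeness to identify the $C^*$-algebra generated by $\Sigma$ and the classes of $\widetilde T_\gamma$ in the Calkin algebra with $\Sigma\rtimes\Gamma$. You reduce Fredholmness to invertibility there; your Atkinson-plus-corner argument is equivalent to the paper's passage to $D^*D+(1-P_1)$, $DD^*+(1-P_2)$ plus inverse closedness. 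Finally you conclude with an invertibility criterion for the trajectory representations $\mathrm{Ind}\,\pi_m$.

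The gap is in your justification of that last criterion, on which the implication ``trajectory elliptic $\Rightarrow$ Fredholm'' rests. Faithfulness of $\{\mathrm{Ind}\,\pi_m\}$ only embeds $\mathrm{Mat}_N(\Sigma\rtimes\Gamma)$ into $\prod_m\mathcal{B}(l^2(\Gamma,H_m)\otimes\mathbb{C}^N)$. An element of this product is invertible only if its components are invertible \emph{and} $\sup_m\|\mathrm{Ind}\,\pi_m(b)^{-1}\|<\infty$, and pointwise invertibility gives no such bound. Already in $C([0,1])$ the evaluations at $x\in(0,1]$ form a faithful family under which $x\mapsto x$ is invertible.

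The same failure occurs in this setting. Take $\Gamma$ trivial and $Y$ noncompact, and remove the point at infinity of $S^\vee Y^+_\infty$ from $\mathfrak{Y}_X$. The remaining representations are still faithful, since that point lies in the closure of the others. Yet the multiplication operator $(\phi,1)$ with $0<\phi\in C_0(Y)$ is invertible in each of them and is not Fredholm. Your argument never uses that $m$ runs over all of $\Prim\Sigma$, so it would ``prove'' this false statement too.

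What you need is the Antonevich--Lebedev invertibility theorem for crossed products by amenable groups (Theorem~12.17 of [AnLe1], which the paper invokes at exactly this point). When $m$ ranges over the whole spectrum, invertibility of every $\mathrm{Ind}\,\pi_m(b)$ forces invertibility of $b$, which supplies the missing uniform bound.

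Also, topological freeness is not what makes $\bigoplus_m\mathrm{Ind}\,\pi_m$ faithful. For amenable $\Gamma$ this is the regular representation induced from a faithful representation of $\Sigma$, which is faithful on $\Sigma\rtimes_r\Gamma=\Sigma\rtimes\Gamma$. Topological freeness enters only in your second step. With the third step replaced by a citation of that theorem, keeping the point at infinity in the index set, your proof goes through.
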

The proof of this theorem is given in Section~\ref{sec5} using the theory of $C^*$-algebras.

\section{Application to Nonlocal Boundary Value Problems}\label{sec4}
Let $Y$ be as above and $M\subset Y$  a compact submanifold of codimension zero with boundary denoted by $Z$.
We now show that nonlocal boundary value problems in $M$ can be defined and studied using the results obtained above.

We suppose that the pair $(Y,Z)$ satisfies Condition~\ref{cond1}. Consider the projections
\begin{equation}
\label{proj_char}
P_{1,2}=(\chi_M,\chi_{W_{1,2}})\in C^\infty_c(Y_\infty)\oplus C^\infty_c(X),
\end{equation}
where $\chi_M$ is the characteristic function of $M$ and $\chi_{W_{1,2}}$ are the characteristic functions of some submanifolds $W_{1,2}\subset X\cap M$ equal to a finite union of submanifolds $\gamma(Z_j)$, where, as above, $\gamma\in\Gamma$ and $Z_j, j\in J$ denote the connected components of $Z$. Then, given an operator $D\in\Psi(Y,X){\rtimes}_{\rm{alg}}\Gamma$, we consider the $\Gamma$-operator
$$
P_2DP_1:L^2(M)\oplus L^2(W_1)\longrightarrow L^2(M)\oplus L^2(W_2).
$$
Theorem~\ref{tr_kon} gives a criterion for the Fredholm property of this operator in terms of the trajectory symbols of the operator $D$. Let us state these conditions in the following interesting example.

\begin{example}
Consider $Y=\mathbb{R}^d$ as ambient manifold  and let $M=\mathbb{B}^d$ be the unit ball.
Then $\Gamma=\mathbb{Z}$ acts by dilations $x\mapsto q^k x$ for a fixed number $0<q<1$. In this case, Condition~\ref{cond1} holds.
We take the projection
$$
\Pi=(\chi_{\mathbb{B}^d},\chi_{\mathbb{S}^{d-1}})\in C^\infty_c(Y_\infty)\oplus C^\infty_c(X)
$$
on the subspace
$$
L^2(\mathbb{B}^d)\oplus L^2(\mathbb{S}^{d-1})\subset
L^2(\mathbb{R}^d)\oplus L^2\Bigl(\bigcup\limits_{k\in\mathbb{Z}}q^k\mathbb{S}^{d-1}\Bigr)
$$
and consider the following $\Gamma$-operator:
\begin{equation}
\label{BdM_example}
\mathcal{D}=\Pi\sum\limits_{k\in\mathbb{Z}}D_k\widetilde{T}^k:
L^2(\mathbb{B}^d)\oplus L^2(\mathbb{S}^{d-1})\longrightarrow
L^2(\mathbb{B}^d)\oplus L^2(\mathbb{S}^{d-1}),
\end{equation}
where $D_k\in\Psi(Y,X)$, while
$$
\widetilde{T}:L^2(\mathbb{R}^d)\oplus L^2\Bigl(\bigcup\limits_{k\in\mathbb{Z}}q^k\mathbb{S}^{d-1}\Bigr)\longrightarrow
L^2(\mathbb{R}^d)\oplus L^2\Bigl(\bigcup\limits_{k\in\mathbb{Z}}q^k\mathbb{S}^{d-1}\Bigr)
$$
is the unitary operator
$$
\widetilde{T}(u(x),\{v_k(x)\}_{k\in\mathbb{Z}})=\bigl(q^{-d/2}u(q^{-1}x),\{ q^{-(d-1)/2}v_{k-1}(q^{-1}x)\}\bigr).
$$
Then we obtain the following trajectory symbols of the operator~\eqref{BdM_example}:
\begin{enumerate}
\item[1)] The {\it interior trajectory symbol} at the points $(x,\xi)\in S^*\mathbb{B}^d, x\ne0$ is given by
\begin{equation}
\label{int_tr_symb_2}
\begin{aligned}
\sigma_{\rm{int}}(\mathcal{D})(x,\xi)&\colon
l^2(\mathbb{Z}_{\leq N})\longrightarrow
l^2(\mathbb{Z}_{\leq N}), \quad N= \frac{\ln |x|}{\ln
{q}}\geq 0
\\
(\sigma_{\rm{int}}(\mathcal{D})(x,\xi)w)(n)&=
\sum\limits_{k\leq
N-n}\sigma_{\rm{int}}(D_k)(q^{-n}x,\xi)w(n+k).
\end{aligned}
\end{equation}
\item[2)] The {\it interior trajectory symbol} at the points $(0,\xi)\in S^*\mathbb{B}^d$ is given by
\begin{equation}
\label{int_tr_symb_1}
\sigma_{\rm{int}}(\mathcal{D})(0,\xi)=\sum\limits_k\sigma_{\rm{int}}(D_k)(0,\xi)\mathcal{T}^k:l^2(\mathbb{Z})\longrightarrow l^2(\mathbb{Z});
\end{equation}
here $\mathcal{T}w(n)=w(n+1)$ is the shift operator on sequences; note that~\eqref{int_tr_symb_2} tends to~\eqref{int_tr_symb_1} as $x\to 0$.
\item[3)] The  {\it boundary trajectory symbol} at the points $(x',\xi')\in S^*\mathbb{S}^{d-1}$ is
\begin{equation*}
\sigma_{\mathbb{S}^{d-1}}(\mathcal{D})(x',\xi')
\in\mathcal{B}(\mathcal{H}_0),\quad\text{where }
\mathcal{H}_0=p\Bigl(\bigoplus\limits_{n\in\mathbb{Z}} (L^2(\mathbb{R})\oplus\mathbb{C})\Bigr).
\end{equation*}
Here the projection $p$ is the symbol of $\Pi$ and is equal to
$$
(u,v)=(\ldots,u_{-1},v_{-1},u_0,v_0,u_1,v_1,\ldots)\stackrel{p}\longmapsto
(\ldots,u_{-1},0,\Pi^-u_0,v_0,0,0,\ldots).
$$
Then \eqref{eq-bnd-trj1} and \eqref{traj_ell_b} give the following expression for the desired boundary symbol:
\begin{equation*}
(\sigma_{\mathbb{S}^{d-1}}(\mathcal{D})(x',\xi')(u,v))(n)=
p
\Bigl(\sum\limits_{k\leq-n}{\kappa}_{q^n}\sigma_{q^{-n}\mathbb{S}^{d-1}}(D_k)(q^{-n}x',\xi'){\kappa}^{-1}_{q^n}
\begin{pmatrix}
u_{n+k}\\
v_{n+k}
\end{pmatrix}
\Bigr).
\end{equation*}
\end{enumerate}
Here we have used the unitary operators ${\kappa}_\lambda$ defined in \eqref{kappalambda}.
\end{example}

\section{Proofs of the Main Results}\label{sec_C*_theory}\label{sec5}
The aim of this section is to prove Theorem~\ref{tr_kon} using the theory of $C^*$-algebras. To this end, we pass to norm closures of the algebras introduced in Section 3.

\subsection*{Continuous functions on $Y_\infty$.}
The norm closure of $C^\infty_c(Y_\infty)\subset L^\infty(Y)$ is the algebra $C_0(Y_\infty)$ of continuous functions on the  locally compact Hausdorff space $Y_\infty$, which tend to zero at infinity (the Hausdorff property and local compactness follow from Gelfand's theorem). The homomorphism of algebras $C_0(Y)\to C_0(Y_\infty)$ is induced by the projection $\pi:Y_\infty\to Y$. Similarly, the closure of $C^\infty_c(X)\subset L^\infty(X)$ is equal to $C_0(X)$.

We define the action of $\Gamma$ on $C_0(Y_\infty)$ and $C_0(X)$ by shift operators
$$
(T_\gamma u)(x) =u(\gamma^{-1}x), \quad\text{for all}\quad \gamma\in\Gamma.
$$

\subsection*{Example.} Let $Y=\mathbb{S}^1\times\mathbb{S}^1$ with coordinates $x_1,x_2$; $\Gamma=\mathbb{Z}\subset{\rm{Diff}}(Y)$ be the group of translations $(x_1,x_2)\mapsto (x_1,x_2+k)$, $Z=\mathbb{S}^1=\{(x_1,0)\}\subset Y$. Let us describe the commutative $C^*$-algebra $C(Y_\infty)$.
\begin{prop}
We define the commutative $C^*$-subalgebra $\mathcal{A}\subset L^\infty(\mathbb{S}^1\times\mathbb{S}^1)$ by
$$
\mathcal{A}=
\left\{
f:\mathbb{S}^1\times\mathbb{S}^1\longrightarrow\mathbb{C}
\left|
\begin{array}{l}
1)\;f \text{ is continuous  at all points }(x_1,x_2), x_2\ne k,\; k\in\mathbb{Z};\\[2mm]
2)\;f\text{ is left and right continuous } \\
 \quad\text{ at the points }(x_1,x_2),\;x_2=k,\; k\in\mathbb{Z};\\
3)\; \lim\limits_{k\to\infty}\Bigl(\max\limits_{x_1\in\mathbb{S}^1}
\left|f(x_1,k+0)-f(x_1,k-0)\right|\Bigr)=0
\end{array}
\right.
\right\}.
$$
Then we have $C(Y_\infty)=\mathcal{A}$.
\end{prop}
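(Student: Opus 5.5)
We prove the two inclusions $C(Y_\infty)\subset\mathcal{A}$ and $\mathcal{A}\subset C(Y_\infty)$. Here $Y_\infty$ is compact since $Y$ is. Recall that $C(Y_\infty)$ is by definition the sup-norm closure in $L^\infty(Y)$ of $\bigcup_N C^\infty(Y_N)$. In this example $X=\{x_2\in\mathbb{Z}+2\pi\mathbb{Z}\}$ and $X_N=\{x_2=k:\ |k|\le N\}$, so $C^\infty(Y_N)$ consists of functions that are smooth off the finitely many circles $x_2=k$, $|k|\le N$, and smooth up to each of these circles from both sides. We first check that $\mathcal{A}$ is a closed $*$-subalgebra of $L^\infty$. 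Properties 1) and 2) are preserved under uniform limits and algebraic operations. For 3), the jump functional $j_k(f)=\max_{x_1}|f(x_1,k+0)-f(x_1,k-0)|$ satisfies $j_k(f)\le 2\|f\|_\infty$ and $j_k(fg)\le \|f\|j_k(g)+\|g\|j_k(f)$, so 3) passes to products and uniform limits by an $\varepsilon/3$ argument. (Since $\{k \bmod 2\pi\}$ is dense, ``$\lim_{k\to\infty}$'' is interpreted over all $k\in\mathbb{Z}$ as $|k|\to\infty$.)

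\textbf{Inclusion $C(Y_\infty)\subset\mathcal{A}$.} Let $f\in C^\infty(Y_N)$. Then $f$ is continuous at every point with $x_2\notin\{k:|k|\le N\}$, in particular at all points with $x_2=k$, $|k|>N$. It also has one-sided limits at the circles $x_2=k$, $|k|\le N$, and its jump vanishes for $|k|>N$. Hence $f\in\mathcal{A}$. Since $\mathcal{A}$ is closed, the closure of $\bigcup_N C^\infty(Y_N)$ lies in $\mathcal{A}$.

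\textbf{Inclusion $\mathcal{A}\subset C(Y_\infty)$.} This is the main step. Given $f\in\mathcal{A}$ and $\varepsilon>0$, we use 3) to choose $N$ with $j_k(f)<\varepsilon$ for $|k|>N$. We then want $g\in C(Y_N)$ (continuous off $X_N$ with one-sided continuous extensions, and then smoothed) with $\|f-g\|_\infty\le C\varepsilon$. The difficulty is that the circles $x_2=k$, $|k|>N$, are dense, so $f$ has infinitely many small jumps accumulating everywhere. We cannot remove them by local modifications near each circle unless we control the modulus of continuity.

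Our approach avoids this by an oscillation argument. For every point $p$ with $x_2(p)\notin X_N$, we claim that $\limsup_{q\to p}|f(q)-f(p)|\le C\varepsilon$. To see this, take $q$ near $p$. Then either the segment joining them in $x_2$ crosses no circle of $X$ (but circles are dense, so this is rare), or we pass through circles with $|k|>N$. Here we need a uniform one-sided continuity. To obtain it, we argue by contradiction. Suppose there are sequences $p_n,q_n$ with $|p_n-q_n|\to0$ and $|f(p_n)-f(q_n)|\ge 3\varepsilon$, all staying in a compact set away from $X_N$. We may pass to a convergent subsequence $p_n\to p$. If $x_2(p)$ is not an integer circle, continuity at $p$ gives a contradiction. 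If $x_2(p)=k$ with $|k|>N$, then left and right continuity at $p$ show that $f(p_n),f(q_n)$ are within $o(1)$ of $f(p,k\pm0)$. These two values differ by less than $\varepsilon$, which again gives a contradiction.

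Thus $f$ has oscillation at most $2\varepsilon$ uniformly on compact subsets of $Y_N$ minus a neighbourhood of its boundary, and the same argument near $X_N$ works one-sidedly. We then mollify $f$ separately on each component of $Y_N$ (the finitely many cylinders between consecutive circles of $X_N$, using reflection near the boundary). This yields $g\in C^\infty(Y_N)$ with $\|f-g\|\le 3\varepsilon$, and since $\varepsilon$ is arbitrary, $f\in C(Y_\infty)$.

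We expect this compactness/oscillation step to be the main obstacle, because it is where the density of the circles and hypothesis 3) interact. As an alternative that sidesteps the mollification, one could instead identify the character spaces via Gelfand duality. Both algebras separate the same points (the points of $Y$ off $X$, together with the two one-sided points over each circle $x_2=k$) and contain the constants, so Stone--Weierstrass on the common Gelfand spectrum would give $\mathcal{A}=C(Y_\infty)$.
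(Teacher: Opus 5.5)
Your proof is correct and follows essentially the paper's route. Both arguments show that $\mathcal{A}$ is closed, establish the easy inclusion $C(Y_N)\subset\mathcal{A}$, and prove density by choosing $N$ from 3) and combining continuity, one-sided continuity and the small jumps with compactness to get uniform oscillation control on the pieces of $Y_N$; the only real difference is that the paper then interpolates piecewise-linearly in $x_2$ over a finite cover of $\mathbb{S}^1_N$, while you mollify with reflection. Your two-variable sequential-compactness step is in fact more careful about uniformity in $x_1$ than the paper's one-variable cover, and although your closing Stone--Weierstrass remark would still need an identification of the characters of $\mathcal{A}$, your proof does not rely on it.
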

\begin{proof}
1. Note that $\mathcal{A}\subset L^\infty(\mathbb{S}^1\times\mathbb{S}^1)$ is a closed subspace. This readily follows, since the properties in the definition of $\mathcal{A}$ are closed with respect to the $\sup$ norm.

2. We also have $C(Y_N)\subset\mathcal{A}$ for all $N$.

3. To prove that $C(Y_\infty)=\mathcal{A}$, it suffices by 1. and 2. to show that $\bigcup\limits_{N}C(Y_N)$ is dense in $\mathcal{A}$. Indeed, given $f\in\mathcal{A}$ and $\varepsilon>0$, there exists $N$ such that:
\begin{equation}
\label{max_diff}
\max\limits_{x_1\in\mathbb{S}^1}\left|f(x_1,k+0)-f(x_1,k-0)\right|<\varepsilon
\text{ whenever }|k|\geq N.
\end{equation}
Denote by $\mathbb{S}^1_N$ the space obtained by cutting $\mathbb{S}^1$ at the points $\{j\;\big|\; |j|\leq N\}$. Topologically, this space is a disjoint union of $2N+1$ segments. It follows from the continuity of $f(x_1,x_2)$ at $x_2\notin\mathbb{Z}$, one-sided continuity at $x_2\in\mathbb{Z}$, and~\eqref{max_diff} that, given any $x_2\in\mathbb{S}^1_N$, there exists $\delta=\delta(x_2)$ such that
$$
|f(x_1,x_2)-f(x_1,x'_2)|<2\varepsilon,\text{ for all }(x_1,x_2) \text{ such that } |x_2-x'_2|<\delta.
$$
Intervals of the form $\{x_2\;\big|\; |x_2-x'_2|<\delta(x'_2)\}$, where $x_2'$ runs over $\mathbb{S}^1_N$, form an open cover of $\mathbb{S}^1_N$. By compactness, we can find a finite subcover and define a finite set $\mathcal{F}\subset \mathbb{S}^1_N$ of centers of intervals in our subcover. Given $x_2\in\mathbb{S}^1_N$, we denote by $x^\pm_2\in\mathcal{F}$ the closest element to $x_2$ from the right/left.

We define a function $f_\varepsilon$ that is piecewise linear in $x_2$ by
$$
f_\varepsilon(x_1,x_2)=f(x_1,x^-_2)(1-\alpha(x_2))+f(x_1,x^+_2)\alpha(x_2),\text{ where } \alpha(x_2)=\frac{x_2-x^-_2}{x^+_2-x^-_2}.
$$
By construction $f_\varepsilon\in C(Y_N)$ and
\begin{align*}
\|f-f_\varepsilon\|
&=\sup\limits_{x_1,x_2}|f(x_1,x_2)-f_\varepsilon(x_1,x_2)|
\\
&=
\sup\limits_{x_1,x_2}|f(x_1,x_2)(1-\alpha(x_2)+\alpha(x_2))-
f(x_1,x^-_2)(1-\alpha(x_2))-f(x_1,x^+_2)\alpha(x_2)|
\\
&\leq
\sup\limits_{x_1,x_2}|f(x_1,x_2)-f(x_1,x^-_2)| +
\sup\limits_{x_1,x_2}|f(x_1,x_2)-f(x_1,x^+_2)|\leq
4\varepsilon.
\end{align*}
This shows that $f\in \overline{\bigcup\limits_N C(Y_N)}$.
\end{proof}

\begin{remark}
One can obtain a similar description of $C_0(Y_\infty)$ in the general case.
\end{remark}
\subsection*{The $C^*$-algebra of Boutet de Monvel problems.}
In the notation of Section~2 we denote by $\overline{\Psi(Y,Z)}\subset\mathcal{B(H)}$ the norm closure of Boutet de Monvel algebra.
It follows from \eqref{estimation_D} that the mapping which takes a Boutet de Monvel operator to its interior and boundary symbols, extends by continuity to an injective homomorphism of $C^*$-algebras
$$
(\sigma_{\rm{int}}, \sigma_Z):
\overline{\Psi(Y,Z)}/\mathcal{K}\longrightarrow
C(S^{\vee}Y^+_Z)\oplus
C(S^*Z,\End(L^2(\mathcal{N}^*Z)\oplus\mathbb{C})).
$$
Here $S^{\vee}Y_Z$ stands for the topological space obtained from the cosphere bundle $S^*Y_Z$ by identification of the pairs of points $(x',0,0,\pm 1)$, where $x'$ runs over $\partial Y_Z$, and $S^{\vee}Y^+_Z$ is its one-point compactification.

We have a short exact sequence of $C^*$-algebras:
\begin{equation}
\label{short_seq_1}
0 \longrightarrow C(S^*Z, \mathcal{K}) \stackrel{\alpha}{\longrightarrow}
\overline{\Psi(Y,Z)}/\mathcal{K} \stackrel{\sigma_{\rm{int}}}{\longrightarrow} C(S^{\vee}Y^+_Z)\longrightarrow 0,
\end{equation}
where $\alpha$ takes  a compact-valued boundary symbol  in $C(S^*Z,\End(L^2(\mathcal{N}^*Z)\oplus\mathbb{C}))$ to a Boutet de Monvel operator with this boundary symbol and interior symbol equal to zero.

\subsection*{The spectrum of the Boutet de Monvel $C^*$-algebra.} The Calkin algebra $\overline{\Psi(Y,Z)}/\mathcal{K}$ of the Boutet de Monvel algebra is denoted by $\Sigma$. An operator $[\mathcal{D}]\in \overline{\Psi(Y,Z)}/\mathcal{K}$ has symbols at the corresponding points of the cosphere bundles, and we define the representations
\begin{equation}
\label{repr_symb}
\begin{matrix}
\sigma_{\rm{int}}(m):\Sigma\longrightarrow\mathbb{C},&
\mathcal{D}\longmapsto\sigma_{\rm{int}}(\mathcal{D})(m), & m\in S^{\vee}Y^+_Z,
\\[3mm]
\sigma_Z(m'):\Sigma\longrightarrow\End(L^2(\mathcal{N}_{m'}^*Z)\oplus\mathbb{C}), &
\mathcal{D}\longmapsto\sigma_Z(\mathcal{D})(m'), & m'\in S^*Z.
\end{matrix}
\end{equation}
It is easy to show that these representations are irreducible (this follows from the fact that the images of these representations contain the ideal of compact operators). It turns out that~\eqref{repr_symb} exhaust all irreducible representations of the considered Calkin algebra. More precisely, the following result holds. We recall all necessary facts from the theory of $C^*$-algebras from~\cite{Dix1} in the appendix.
\begin{prop}
\label{prop_GCR}
The algebra $\Sigma=\overline{\Psi(Y,Z)}/\mathcal{K}$ is a GCR-$C^*$-algebra and the following homeomorphism of topological spaces is valid
\begin{equation}
\label{symb_homeom_1}
\begin{split}
 \mathfrak{Y}_Z&\simeq \Prim\Sigma,\\
(m, m')&\longmapsto (\sigma_{\rm{int}}(m),
\sigma_Z(m')),
\end{split}
\end{equation}
where $\Prim\Sigma$ stands for the space of primitive ideals in $\Sigma$ endowed with Jacobson topology (e.g.~\cite[Definition 3.1.1]{Dix1}), and the topology on $\mathfrak{Y}_Z$ is defined in~\eqref{clos_un} and~\eqref{V_Y_def}.
\end{prop}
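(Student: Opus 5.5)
The plan is to use the short exact sequence \eqref{short_seq_1} together with the standard description of the spectrum of an extension. By \cite{Dix1}, if $J\subset\Sigma$ is a closed ideal, then $\Prim\Sigma$ is the disjoint union of the open subset $\Prim J$ and the closed subset $\Prim(\Sigma/J)$. Moreover, $\Sigma$ is GCR as soon as $J$ and $\Sigma/J$ are GCR.

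\textbf{GCR property.} Take $J=\alpha(C(S^*Z,\mathcal{K}))$. Then $J$ is a continuous-trace algebra, hence GCR, with spectrum $S^*Z$; its irreducible representations are the evaluations at $m'\in S^*Z$. The quotient $\Sigma/J\simeq C(S^\vee Y^+_Z)$ is commutative, hence GCR, with spectrum $S^\vee Y^+_Z$. Therefore $\Sigma$ is GCR. Every irreducible representation of $\Sigma$ either restricts nontrivially to $J$, in which case it is the unique extension of an irreducible representation of $J$, or it factors through $\Sigma/J$. In the first case the extension of the evaluation at $m'$ to $\Sigma$ is exactly $\sigma_Z(m')$: $\sigma_Z(m')$ is irreducible and agrees with the evaluation at $m'$ on $J$, so uniqueness of extensions identifies them. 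In the second case the representation is $\sigma_{\rm int}(m)$. For GCR algebras, primitive ideals correspond bijectively to equivalence classes of irreducible representations. This gives the bijection \eqref{symb_homeom_1}, provided the representations listed in \eqref{repr_symb} are pairwise inequivalent. They are: the $\sigma_Z(m')$ have distinct kernels for distinct $m'$, and the $\sigma_{\rm int}(m)$ are distinct characters.

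\textbf{Topology.} The Jacobson topology is determined by closures: the closure of a set $S$ of primitive ideals is the hull of $\bigcap S$. Take $U\sqcup V$ as in \eqref{clos_un}. Its closure inside the closed subset $\Prim(\Sigma/J)$ is $\overline U$, and its closure inside the open subset $\Prim J$ is $\overline V$, because the topologies on these subsets are the usual ones. The remaining task, and the main obstacle, is to find which characters $\sigma_{\rm int}(m)$ lie in the closure of $V\subset S^*Z$. Equivalently, we must show that
\[
\sigma_{\rm int}(\mathcal D)(m)=0 \quad\text{whenever}\quad \sigma_Z(\mathcal D)(m')=0 \ \text{for all } m'\in V
\]
if and only if $m\in\overline{V_Y}$.

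To do this I would use the explicit form \eqref{bound_symb_trans_exp}. Its non-compact part is $\Pi^+a_+\Pi^++\Pi^-a_-\Pi^-$, where $a_\pm(x',0,\xi',\xi_n)$ is multiplication by a function of $\xi_n$. Operator norms are bounded below by the norm modulo compacts, and the image of $\sigma_Z(\mathcal D)(x',\xi')$ in the Calkin algebra of $L^2(\mathbb R)\oplus\mathbb C$ is determined by the values of $a_\pm(x',0,\xi',\xi_n)$ for $\xi_n\in\mathbb R$. These are exactly the points $(x',0,(\cos\varphi)\xi',(\sin\varphi)\xi_n)$ of $V_Y$, on both sides of $Z$. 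The endpoints $\varphi=\pm\pi/2$ are the identified points of $S^\vee Y_Z$. This proves one inclusion: $\sigma_Z$ vanishing on $V$ forces $\sigma_{\rm int}$ to vanish on $V_Y$, hence by continuity on $\overline{V_Y}$.

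For the converse, let $m\notin\overline{V_Y}$. Choose an interior symbol $a$ with the transmission property that satisfies $a(m)=1$ and has support in a small neighbourhood of $m$ disjoint from $V_Y$, for example supported away from $x_n=0$ or away from the directions over $V$. Then $\mathrm{Op}(a)$ has boundary symbol vanishing on $V$. Finally I would check that a set is closed precisely when it is stable under this closure operation, so that the bijection is a homeomorphism.
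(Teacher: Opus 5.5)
Your proposal is correct and follows essentially the same route as the paper. Both arguments use the composition series $0\subset C(S^*Z,\mathcal K)\subset\Sigma$ from \eqref{short_seq_1}, Dixmier's decomposition of $\Prim\Sigma$ into the open piece $S^*Z$ and the closed piece $S^\vee Y^+_Z$, the hull--kernel criterion for which characters lie in the closure of $V\subset S^*Z$, and a separating interior symbol for $m\notin\overline{V_Y}$. The one difference is that you make explicit, via the essential norm of the Wiener--Hopf parts $\Pi^\pm a_\pm\Pi^\pm$ (a bound that passes to the norm closure by continuity), the ``compatibility of interior and boundary symbols'' step that the paper invokes without detail.
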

\begin{proof}
The proof is similar to that in~\cite[Theorem~60.2]{AnLe2}.

1. Consider the composition series
$$
I_0=0\subset I_1=C(S^*Z, \mathcal{K})\subset I_2=\Sigma.
$$
From the exact sequence~\eqref{short_seq_1} we obtain the isomorphism $I_2/I_1\simeq C(S^\vee Y^+_Z)$ of $C^*$-al\-geb\-ras. Thus, $I_2$ is a GCR-$C^*$-algebra by~\cite[Proposition 4.3.4]{Dix1} and we have the bijections (see~\cite[Proposition 2.11.5]{Dix1})
$$
\Prim I_2=\Prim(I_2/I_1)\sqcup \Prim I_1=\mathfrak{Y}_Z,
$$
i.e., the primitive ideals in $\Sigma$ are exhausted by the kernels of irreducible representations~\eqref{repr_symb}. This proves that~\eqref{symb_homeom_1} is a bijection.

2. We now describe the topology on $\Prim I_2$. By~\cite[Proposition 3.2.1]{Dix1} the set $\Prim_{I_1} (I_2)\simeq S^\vee Y^+_Z$ is closed in $\Prim I_2$, while $\Prim^{I_1} (I_2)\simeq S^*Z$ is open in $\Prim I_2$.

Let $V\subset\Prim I_2$ be a set. We show that its closure can be described by~\eqref{clos_un}, \eqref{V_Y_def}. It is sufficient to consider two cases: when $V\subset S^\vee Y^+_Z$ and when $V\subset S^*Z$.

Firstly, let $V\subset S^\vee Y^+_Z$. Then, since $S^\vee Y^+_Z$ is closed in $\Prim I_2$, the closure of $V$ coincides with the closure $\overline{V}\subset S^\vee Y_Z$ in the sense of the topology on $S^\vee Y_Z$, i.e., in this case~\eqref{V_Y_def} is true.

Now let $V\subset S^*Z$. Then by the definition of the Jacobson topology (see~\cite[Theorems 3.4.4 and 3.4.10]{Dix1}) a point $z$ is in $cl(V)$ if and only if one has:
\begin{equation}
\label{ker_inclusion_1}
\bigcap\limits_{x\in V}\ker \pi_x\subset \ker \pi_z,
\end{equation}
where $\pi_x$ denotes the irreducible representation for $x\in\Prim\Sigma$.
More precisely, the set on the left in~\eqref{ker_inclusion_1} is equal to
\begin{equation}
\label{ker_inclusion_left_1}
\bigcap\limits_{x\in V}\ker \pi_x=
\{
[\mathcal{D}]\in I_2\mid \sigma_Z(\mathcal{D})(x)=0\, \text{ for all } x\in V
\}.
\end{equation}
If $z\in cl(V)\cap S^*Z$, then it follows from~\eqref{ker_inclusion_left_1} and the continuity of the boundary symbol $\sigma_Z(\mathcal{D})$ on $S^*Z$ that in this case the inclusion~\eqref{ker_inclusion_1} holds if and only if $z$ is in the closure of $V\subset S^*Z$ in the usual topology. If $z\in cl(V)\cap S^\vee Y^+_Z$, then relation~\eqref{ker_inclusion_left_1} and the compatibility condition of interior and boundary symbols imply that condition~\eqref{ker_inclusion_1} holds for all points $z\in\overline{V_Y}\subset S^\vee Y^+_Z$ (recall that given a set $V\subset S^*Z$ the set $V_Y$ was defined in~\eqref{V_Y_def}). Also, if $z\notin\overline{V_Y}$, then condition~\eqref{ker_inclusion_1} does not hold, since one can construct an operator $\mathcal{D}\in\Psi(Y,Z)$ with the following properties:
\begin{itemize}
\item $\sigma_{\rm{int}}(\mathcal{D})(z)\ne 0$, i.e., $[\mathcal{D}]\notin\ker\pi_z$;
\item $\sigma_Z(\mathcal{D})(m)=0\text{ for all points } m\in V$.
\end{itemize}
The interior symbol of this operator is constructed in the following way: Let $\chi\in C_0(S^\vee Y_Z)$ be a function with the properties $\chi(z)=1$ and $\chi|_{\overline{V_Y}}\equiv 0$. Then we set $\sigma_{\rm{int}}(\mathcal{D})=\chi$.

Thus, we find that the closure of $V\subset S^*Z$ in $\Prim I_2$ is equal to $\overline{V_Y}\sqcup\overline{V}\subset  \mathfrak{Y}_Z$, i.e., in this case~\eqref{clos_un} also holds.

This completes the proof of Proposition~\ref{prop_GCR}.
\end{proof}

\subsection*{The spectrum of the Boutet de Monvel $C^*$-algebra for pairs $(Y,X)$.}

The estimate~\eqref{estimation_D} of the norm modulo compact operators  implies that the symbol mapping
on $\Psi(Y,X)$ extends by continuity to a $C^*$-homomorphism
\begin{equation*}
(\sigma_{\rm{int}},\sigma_X):\overline{\Psi(Y,X)}/\mathcal{K}\longrightarrow
C(S^\vee Y^+_\infty)\oplus C_0(S^*X,\End(L^2(\mathcal{N}^*X)\oplus\mathbb{C}))^+
\end{equation*}
and we define the representations of this algebra
\begin{equation}
\label{repr_symb_alg}
\begin{aligned}
&\begin{aligned}
\sigma_{\rm int}(m): \overline{\Psi(Y,X)}/\mathcal{K} &
\longrightarrow \mathbb{C},\\
\mathcal{D} & \longmapsto \sigma_{\rm
int}(\mathcal{D})(m),
\end{aligned}
&m&\in S^\vee Y^+_\infty;
\\
&\begin{aligned}
\sigma_X(m'): \overline{\Psi(Y,X)}/\mathcal{K} &
\longrightarrow
\mathcal{B}(L^2(\mathcal{N}^*_{m'}X)\oplus\mathbb{C}),\\
\mathcal{D} & \longmapsto \sigma_X(\mathcal{D})(m'),
\end{aligned}
&m'&\in S^*X.
\end{aligned}
\end{equation}
These representations are irreducible.

\begin{lemma}
We have a short exact sequence of $C^*$-algebras
\begin{equation}
\label{short_seq_2}
0 \longrightarrow C_0(S^*X, \mathcal{K})
\xrightarrow{\alpha_\infty}
\overline{\Psi(Y,X)}/\mathcal{K}
\xrightarrow{\sigma_{\rm{int}}}
C(S^{\vee}Y^+_\infty)\longrightarrow 0,
\end{equation}
where $C_0(S^*X, \mathcal{K})\subset C_0(S^*X,\End(L^2(\mathcal{N}^*X)\oplus\mathbb{C}))$ stands for the subspace of functions valued in compact operators and the mapping $\alpha_\infty$ is defined on the dense subset $C_c(S^*X,\mathcal{K})=\bigcup\limits_N C(S^*X_N,\mathcal{K})$ as in~\eqref{short_seq_1}.
\end{lemma}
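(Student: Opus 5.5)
The plan is to obtain \eqref{short_seq_2} as an inductive limit of the exact sequences \eqref{short_seq_1} for the pairs $(Y,X_N)$. For each $N$ we are dealing with a transmission problem on the submanifold $X_N$: on a relatively compact neighbourhood this is a finite union of disjoint closed hypersurfaces, since by Condition~\ref{cond1} the sets $\gamma(Z_j)$, $|\gamma|\le N$, either coincide or are disjoint. Hence \eqref{short_seq_1} applies (with the modification for noncompact $Y$ from the Remark in Section~\ref{sec2}). Write $\Sigma_N=\overline{\Psi_X(Y,X_N)}/\mathcal{K}$. Then $\Sigma_N$ fits into
$$
0\to C(S^*X_N,\mathcal{K})\to \Sigma_N\xrightarrow{\sigma_{\rm int}} C(S^\vee Y^+_{N})_X\to 0,
$$
where the quotient is the closure of the interior symbols having the transmission property on $X\setminus X_N$.

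These sequences are compatible with the inclusions $\Sigma_N\subset\Sigma_{N+1}$. By Proposition~\ref{symb_trans_prop}, an operator in $\Psi_X(Y,X_N)$ regarded in $\Psi(Y,X_{N+1})$ has the same interior symbol, and its boundary symbol on $X_{N+1}\setminus X_N$ is the restriction of the interior symbol. In particular, an element with zero interior symbol has boundary symbol supported on $S^*X_N$. So the ideals embed as $C(S^*X_N,\mathcal{K})\subset C(S^*X_{N+1},\mathcal{K})$, extending by zero. The algebra $\overline{\Psi(Y,X)}/\mathcal{K}$ is the closure of $\bigcup_N\Sigma_N$, i.e.\ the $C^*$-inductive limit. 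The symbol algebra is by definition $C(S^\vee Y^+_\infty)=\overline{\bigcup_N C(S^\vee Y^+_N)_X}$, and the union of the ideals is dense in $C_0(S^*X,\mathcal{K})$.

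It remains to check that exactness survives the inductive limit, which it does for $C^*$-algebras. The steps are as follows.

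(i) $\sigma_{\rm int}$ is contractive: the interior symbol is bounded by the essential norm by \eqref{estimation_D}. It is also surjective, since $*$-homomorphisms have closed range and the range contains a dense set.

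(ii) $\alpha_\infty$ is isometric on the dense union, because each $\alpha$ is an injective $*$-homomorphism hence isometric. So it extends to an isometric embedding, and its range lies in $\ker\sigma_{\rm int}$.

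(iii) Exactness in the middle is the main obstacle. Given $a$ in the limit with $\sigma_{\rm int}(a)=0$, we approximate $a$ by $a_N\in\Sigma_N$ with $\|\sigma_{\rm int}(a_N)\|<\varepsilon$. To correct $a_N$, we use that $\Sigma_N\to C(S^\vee Y^+_N)_X$ has norm-preserving lifts modulo the ideal: the quotient norm equals the symbol norm. So there is $b_N\in\Sigma_N$ with the same symbol as $a_N$ and $\|b_N\|<2\varepsilon$. Then $a_N-b_N$ lies in $C(S^*X_N,\mathcal{K})$ and is $3\varepsilon$-close to $a$. Hence $\ker\sigma_{\rm int}$ is contained in the closure of $\bigcup_N \alpha(C(S^*X_N,\mathcal{K}))$, which is the image of $\alpha_\infty$.

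The delicate point here is the identification of norms. The norm of $b_N$ in $\Sigma_N$ must agree with its norm in the limit algebra, which holds because the inclusions are injective $*$-homomorphisms. We must also make sure that the boundary symbols on the new components of $X_{N+1}\setminus X_N$ do not leave the compact-valued ideal. This holds because, for an element with zero interior symbol, they vanish by Proposition~\ref{symb_trans_prop}.
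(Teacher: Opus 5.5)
Your proof is correct and follows essentially the same route as the paper. The isometry of $\alpha$ makes $\alpha_\infty$ well defined and injective, and surjectivity of $\sigma_{\rm int}$ follows from density of the symbols coming from the algebras $\Psi_X(Y,X_N)$. Exactness in the middle is obtained in both arguments by correcting approximants $A_N$ with lifts $B_N$ of small norm, $\|B_N\|\le 2\|\sigma_{\rm int}(A_N)\|$, so that $A_N-B_N$ lies in the image of $\alpha$.

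The differences are cosmetic. You get surjectivity from the closed range of $*$-homomorphisms, whereas the paper uses an explicit Cauchy sequence of lifts with vanishing boundary components. You also spell out the compatibility of the ideals under $X_N\subset X_{N+1}$, which the paper leaves implicit.
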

\begin{proof}
1. The mapping $\alpha_\infty$ is well defined since $\alpha$ in~\eqref{short_seq_1} is norm-preserving. This also implies that $\alpha_\infty$ is injective.

2. Let us prove that $\sigma_{\rm{int}}$ is surjective.
Given $a\in C(S^\vee Y^+_\infty)$ and $a_N\longrightarrow a$ in $L^\infty(S^* Y)$, where $a_N\in C(S^\vee Y^+_N)$, we choose $A_N\in\Psi_X(Y,X_N)$ with
$\sigma_{\rm{int}}(A_N) = a_N$ and boundary, coboundary and Green components equal to zero. Then the norm of $A_N-A_{N'}$ modulo compact operators is equal to $\|a_N-a_{N'}\|$. This implies that $A_N$ converges in $\overline{\Psi(Y,X)}/\mathcal{K}$ to some $B$ as $N\to\infty$. Hence, $\sigma_{\rm{int}}(B)=\lim\limits_{N\to\infty}a_N=a$ and we obtain the desired surjectivity of $\sigma_{\rm{int}}$ in~\eqref{short_seq_2}.

3. Let us prove the exactness of~\eqref{short_seq_2} in the middle term. The inclusion ${\rm{Im}}\,\alpha_\infty\subset{\rm{ker}}\,\sigma_{\rm{int}}$ follows by continuity from the exactness of sequences of the form~\eqref{short_seq_1} with $Z=X_N,$ $N\geq1$. Let us show that ${\rm{ker}}\,\sigma_{\rm{int}}\subset{\rm{Im}}\,\alpha_\infty$. Indeed, given $A=\lim\limits_{N\to\infty}A_N$, $A_N\in\Psi_X(Y,X_N)$, the fact that  $\sigma_{\rm{int}}(A)=0$ implies that $\sigma_{\rm{int}}(A_N)\to 0$. We choose $B_N\in\Psi_X(Y,X_N)$  with $\sigma_{\rm{int}}(B_N)=\sigma_{\rm{int}}(A_N)$ and $\|B_N\|\leq 2\|\sigma_{\rm{int}}(A_N)\|$. Then we have
$$
\lim\limits_{N\to\infty}(A_N-B_N)=\lim\limits_{N\to\infty}A_N-\lim\limits_{N\to\infty}B_N=A,
$$
while $\sigma_{\rm{int}}(A_N-B_N)=0$. By the exactness of~\eqref{short_seq_1} we have $A_N-B_N=\alpha(c_N)$, where $c_N\in C(S^*X_N,\mathcal{K})$ and
$$
\|c_N-c_{N'}\|=\|\alpha(c_N-c_{N'})\|\leq\|A_N-A_{N'}\| + \|B_N-B_{N'}\|.
$$
This estimate and the convergence of $A_N$ and $B_N$ as $N\to\infty$ imply that $c_N\to c$ as $N\to\infty$, where $c\in C_0(S^*X,\mathcal{K}),$ and we have by continuity $\alpha_\infty(c)=A$ in $\overline{\Psi(Y,X)}/\mathcal{K}$.

The proof of the lemma is now complete.
\end{proof}

\begin{prop}
The algebra $\Sigma=\overline{\Psi(Y,X)}/\mathcal{K}$ is a GCR-$C^*$-algebra and we have a homeomorphism of spaces:
\begin{equation}
\label{symb_homeom_2}
\begin{aligned}
\mathfrak{Y}_X &\simeq \Prim\Sigma\\
(m, m')&\longmapsto (\sigma_{\rm{int}}(m),
\sigma_X(m')).
\end{aligned}
\end{equation}
\end{prop}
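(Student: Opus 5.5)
The plan is to transplant the proof of Proposition~\ref{prop_GCR} to the pair $(Y,X)$, with the short exact sequence~\eqref{short_seq_2} replacing~\eqref{short_seq_1}.

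\textbf{GCR property and bijection.} Consider the composition series
$$
I_0=0\subset I_1=\alpha_\infty(C_0(S^*X,\mathcal{K}))\subset I_2=\Sigma .
$$
The ideal $I_1\simeq C_0(S^*X,\mathcal{K})$ is a continuous-trace, hence GCR, algebra. Its primitive ideal space is $S^*X$, realized by the evaluations $\sigma_X(m')$. By~\eqref{short_seq_2}, $I_2/I_1\simeq C(S^\vee Y^+_\infty)$ is commutative, with spectrum $S^\vee Y^+_\infty$ realized by the characters $\sigma_{\rm int}(m)$. By~\cite[Proposition 4.3.4]{Dix1}, $\Sigma$ is GCR. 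By~\cite[Proposition 2.11.5]{Dix1}, $\Prim\Sigma=\Prim(I_2/I_1)\sqcup\Prim I_1$, and this is exactly the set of kernels of the representations~\eqref{repr_symb_alg}. So the map~\eqref{symb_homeom_2} is a bijection.

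We still need that these representations are irreducible and that the ideals attached to distinct points are distinct.
\begin{itemize}
\item For $m'\in S^*X$, the image of $\sigma_X(m')$ restricted to $I_1$ contains all compact operators on $L^2(\mathcal{N}^*_{m'}X)\oplus\mathbb{C}$. This is because elements of $C_c(S^*X,\mathcal{K})$ can have arbitrary compact values at a point.
\item Points are separated by symbols. We use Urysohn functions in $C_c(S^*X,\mathcal{K})$, which live on the finite pieces $S^*X_N$, together with functions in $C(S^\vee Y^+_\infty)$.
\end{itemize}

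\textbf{Topology.} By~\cite[Proposition 3.2.1]{Dix1}, $S^\vee Y^+_\infty=\Prim_{I_1}\Sigma$ is closed and $S^*X$ is open. The closure of a set $U\subset S^\vee Y^+_\infty$ is therefore its closure in the Gelfand topology, because the Jacobson topology on the quotient $C(S^\vee Y^+_\infty)$ is the usual one.

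For $V\subset S^*X$, a point $z$ lies in $cl(V)$ if and only if~\eqref{ker_inclusion_1} holds. The kernel intersection consists of the classes $[\mathcal{D}]$ with $\sigma_X(\mathcal{D})|_V=0$.
\begin{itemize}
\item Points $z\in S^*X$: since $\sigma_X(\mathcal{D})$ is continuous on $S^*X$, and elements with prescribed compact-valued boundary symbol exist, $z$ lies in $cl(V)$ exactly when it is in the ordinary closure $\overline V$.
\item Points $z\in S^\vee Y^+_\infty$, first inclusion: if $z\in\overline{V_Y}$, then $z\in cl(V)$. This follows from the compatibility between interior and boundary symbols. On a dense subalgebra $\Psi_X(Y,X_N)$, the boundary symbol at $(x',\xi')$ contains the multiplication operator by $a_\pm(x',0,\xi',\xi_n)$. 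If that operator vanishes, the interior symbol vanishes on the half-circle $V_Y$, and both statements pass to the closure by continuity.
\item Points $z\in S^\vee Y^+_\infty$, converse: if $z\notin\overline{V_Y}$, we need an operator with $\sigma_{\rm int}(\mathcal{D})(z)\ne0$ and $\sigma_X(\mathcal{D})|_V=0$.
\end{itemize}

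\textbf{Main obstacle.} The converse step is where the construction differs from the compact case. Take a cutoff $\chi\in C(S^\vee Y^+_\infty)$ with $\chi(z)=1$ and $\chi|_{\overline{V_Y}}=0$. Lift it by the surjectivity part of~\eqref{short_seq_2}, choosing lifts $A_N\in\Psi_X(Y,X_N)$ with no Green, boundary or coboundary parts, converging to a lift of $\chi$.

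The difficulty is that $\sigma_X$ of a lift of $\chi$ need not vanish on $V$ when $V$ accumulates. Two sub-steps handle this.
\begin{itemize}
\item First choose $\chi$ so that $\chi$ vanishes on a neighbourhood of $\overline{V_Y}$, and approximate it uniformly by $\chi_N\in C(S^\vee Y^+_N)$ that also vanish there. The operators $\Pi^\pm a_\pm\Pi^\pm$ then vanish identically at points of $V$, so $\sigma_X(A_N)|_V$ is built only from the Green-type remainder.
\item This remainder is compact-valued, so it can be subtracted using an element $\alpha_\infty(c)$ with $c\in C_0(S^*X,\mathcal{K})$. That subtraction does not change $\sigma_{\rm int}$.
\end{itemize}
This yields the required operator. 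Hence the closure of $V$ equals $\overline{V_Y}\sqcup\overline V$, which is~\eqref{clos_un}, and~\eqref{symb_homeom_2} is a homeomorphism.
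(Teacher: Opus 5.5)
Your proposal is correct and follows the paper's proof: the composition series $0\subset C_0(S^*X,\mathcal{K})\subset\Sigma$ from~\eqref{short_seq_2} gives the GCR property and the bijection, and the topology comes from repeating the closure argument of Proposition~\ref{prop_GCR} with $Z$ replaced by $X$ and $\chi$ chosen by Urysohn's lemma. The ``main obstacle'' you describe does not actually arise: for a lift $A_N$ with no Green, boundary or coboundary parts, the boundary symbol at $m'\in V$ is either a compression of the multiplication by $a_N$ along the half-circle of $V_Y$ over $m'$ (on $X_N$) or that multiplication operator itself (on $X\setminus X_N$), so the terms $\Pi^\mp a\Pi^\pm$ you call a Green-type remainder vanish too, giving $\sup_{V}\|\sigma_X(A_N)\|\le\|a_N-\chi\|_\infty\to0$ for any uniform approximants, and your subtraction by $\alpha_\infty(c)$ is unnecessary ($c=0$ works).
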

\begin{proof}
The proof is similar to that of Proposition~\ref{prop_GCR}.

1. First, \eqref{short_seq_2} gives the composition series
$$
I_0=0\subset I_1=C_0(S^*X,\mathcal{K})\subset I_2=\overline{\Psi(Y,X)}/\mathcal{K}
$$
and $I_2/I_1\simeq C(S^\vee Y^+_\infty)$. Hence, $I_2$ is a GCR-$C^*$-algebra and
$$
{\rm{Prim}}I_2={\rm{Prim}}(I_2/I_1)\sqcup{\rm{Prim}}I_1=\mathfrak{Y}_X,
$$
i.e., the primitive ideals in $\overline{\Psi(Y,X)}/\mathcal{K}$ are  exhausted by the kernels of the irreducible representations~\eqref{repr_symb_alg}, and~\eqref{symb_homeom_2} is a bijection.

2. The proof that the Jacobson topology on ${\rm{Prim}}(\overline{\Psi(Y,X)}/\mathcal{K})$ coincides with that described in~\eqref{clos_un},\eqref{V_Y_def} repeats the argument in the proof of Proposition~\ref{prop_GCR} if we replace $Z$ by $X$ and use Urysohn's lemma to construct the function $\chi$.
\end{proof}

\begin{cor}
Since $\overline{\Psi(Y,X)}/\mathcal{K}$ is a GCR-$C^*$-algebra, by~\cite[Theorem 4.3.7]{Dix1} we have a bijection
$$
\widehat{\Sigma}\longrightarrow\Prim \Sigma,\quad \pi\in\widehat{\Sigma}\longmapsto\ker\pi\in\Prim\Sigma,
$$
where $\widehat{\Sigma}$ is the spectrum of $\Sigma$ (the set of equivalence classes of irreducible representations of $\Sigma$).
\end{cor}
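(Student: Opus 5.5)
The corollary is a direct application of the GCR machinery, so I would prove it by quoting the relevant theorem from \cite{Dix1} and checking its hypotheses against the preceding proposition.

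\textbf{Step 1: the map is well defined and surjective.} For any $C^*$-algebra, the kernel of an irreducible representation is by definition a primitive ideal. Unitarily equivalent representations have the same kernel. Hence $\pi\mapsto\ker\pi$ is a well-defined surjection $\widehat{\Sigma}\to\Prim\Sigma$; surjectivity holds by the definition of $\Prim\Sigma$.

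\textbf{Step 2: injectivity.} This is the only place where the GCR property is needed. By the preceding proposition, $\Sigma$ is GCR (equivalently, type I, postliminal). Take $\pi_1,\pi_2\in\widehat{\Sigma}$ with $\ker\pi_1=\ker\pi_2=J$ and pass to the quotient $\Sigma/J$, which is again GCR. There $\pi_1,\pi_2$ become faithful irreducible representations.

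Since $\Sigma/J$ is GCR, it has a nonzero closed ideal $I$ with $\pi_i(I)\subset\mathcal{K}(H_{\pi_i})$. Each $\pi_i$ is faithful, so $\pi_i(I)\neq 0$, and irreducibility then gives $\pi_i(I)=\mathcal{K}(H_{\pi_i})$.

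Because $\pi_1$ and $\pi_2$ have the same kernel, $\pi_2\circ\pi_1^{-1}$ is a $*$-isomorphism $\mathcal{K}(H_{\pi_1})\to\mathcal{K}(H_{\pi_2})$. Every such isomorphism is implemented by a unitary $U$. Therefore $\pi_1|_I\simeq\pi_2|_I$. An irreducible representation is determined up to equivalence by its restriction to an ideal on which it is nonzero, so $\pi_1\simeq\pi_2$. This is exactly the content of \cite[Theorem 4.3.7]{Dix1}, which I would cite rather than reprove.

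\textbf{Step 3: the concrete description.} Combining the bijection with \eqref{symb_homeom_2}, the irreducible representations of $\Sigma$ are, up to equivalence, exactly the $\sigma_{\rm int}(m)$ for $m\in S^\vee Y^+_\infty$ and the $\sigma_X(m')$ for $m'\in S^*X$. This description is what Section~\ref{sec5} needs later for the crossed-product isomorphism theorem. A routine check shows these representations are pairwise inequivalent, because their kernels differ.

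\textbf{Main obstacle.} The only real content is the GCR property, and it is already supplied by the composition series $0\subset C_0(S^*X,\mathcal{K})\subset\Sigma$ with commutative quotient. So the main obstacle has effectively been handled in the preceding proposition, and nothing new needs to be proved here.
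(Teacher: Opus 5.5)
Your proposal is correct and follows the paper's argument: the GCR property comes from the composition series $0\subset C_0(S^*X,\mathcal{K})\subset\Sigma$ with commutative quotient established in the preceding proposition, and the bijection is then \cite[Theorem 4.3.7]{Dix1}. Your unpacking of injectivity is the standard proof of that theorem. The one step you leave implicit is that $\pi_i|_I$ is irreducible, because $I$ is an ideal on which $\pi_i$ is nonzero; this is what lets you use the CCR property of $I$ to get $\pi_i(I)\subset\mathcal{K}(H_{\pi_i})$.
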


\subsection*{$\Gamma$-Boutet de Monvel operators for pairs $(Y,X)$.}
We consider actions of $\Gamma$ on $C^*$-al\-gebras $\overline{\Psi(Y,X)}$ and $C_0(Y_\infty\sqcup X)^+$ by automorphisms and define the corresponding maximal $C^*$-crossed products (see e.g.~\cite{Ped1}), denoted by $\overline{\Psi(Y,X)}{\rtimes}\Gamma$ and $C_0(Y_\infty\sqcup X)^+	{\rtimes}\Gamma$, respectively.

Consider triples of the form $\mathcal{D}=(D, P_1, P_2)$ (cf.~\eqref{D_restr}), where
$$
D\in{\rm{Mat}}_N(\overline{\Psi(Y,X)}{\rtimes}\Gamma),\qquad
P_{1,2}\in{\rm{Mat}}_N(C_0(Y_\infty\sqcup X)^+	{\rtimes}\Gamma).
$$
Here $P_{1,2}$ are projections $(P_j^2=P_j)$ and we suppose that
\begin{equation}
\label{D_proj_G}
D=P_2DP_1.
\end{equation}
Then~\eqref{D_proj_G} implies that $D$ restricts to the $\Gamma$-operator
\begin{equation}
\label{D_restr_G}
D\colon P_1(\mathcal{H}\otimes\mathbb{C}^N)\longrightarrow P_2(\mathcal{H}\otimes\mathbb{C}^N),
\end{equation}
acting between the ranges of $P_1$ and $P_2$.

\subsection*{Ellipticity and the Fredholm property.}
\begin{definition}
A triple $(D, P_1, P_2)$ is {\em elliptic}, if there exists a triple $(R, P_2, P_1)$ such that
\begin{equation}
\label{ellip_rel}
\begin{aligned}
\sigma_{\rm{int}}(DR)&=\sigma_{\rm{int}}(P_2), &
\sigma_{\rm{int}}(RD)&=\sigma_{\rm{int}}(P_1),\\
\sigma_X(DR)&=\sigma_X(P_2), &
\sigma_X(RD)&=\sigma_X(P_1).
\end{aligned}
\end{equation}
\end{definition}

\begin{definition}
A triple $(D, P_1, P_2)$ is {\em trajectory elliptic}, if the two following conditions are satisfied:\\
1) for all $m\in S^*Y^+_\infty$ the interior trajectory symbol of $D$ is invertible as the operator in the spaces
\begin{equation}
\label{traj_ell_in_G}
\sigma_{\rm{int}}(D)(m):
P_1(m) l^2(\Gamma,\mathbb{C}^N) \longrightarrow
P_2(m) l^2(\Gamma,\mathbb{C}^N);
\end{equation}
2) for all $m'\in S^*X$ the boundary trajectory symbol of $D$ is invertible as the operator in the spaces
\begin{equation}
\label{traj_ell_b_G}
\sigma_X(D)(m'):
P_1(m') l^2(\Gamma,(L^2(\mathcal{N}^*_{m'}X)\oplus\mathbb{C})\otimes\mathbb{C}^N) \longrightarrow
P_2(m') l^2(\Gamma,(L^2(\mathcal{N}^*_{m'}X)\oplus\mathbb{C})\otimes\mathbb{C}^N).
\end{equation}
Here, for $h\in \Gamma$ and $\Theta\in \mathcal N^*_{m'}X$,
are the trajectory symbols of $P_j$, see~\eqref{eq-bnd-trj1}.
\end{definition}

The following result contains the statement in Theorem~\ref{tr_kon} as a special case.
\begin{theorem}
\label{tr_kon_G}
Suppose that $\Gamma$ is amenable and its action on the set of primitive ideals of the Calkin algebra $\overline{\Psi(Y,X)}/\mathcal{K}$, see~\eqref{symb_homeom_2}, is topologically free. Then the three following conditions are equivalent:\\
1) $(D, P_1, P_2)$ is elliptic;\\
2) $(D, P_1, P_2)$ is trajectory elliptic;\\
3) the operator \eqref{D_restr_G} associated with $(D, P_1, P_2)$ has the Fredholm property.
\end{theorem}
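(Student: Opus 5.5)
The plan is to close the cycle of implications 3) $\Rightarrow$ 1) $\Rightarrow$ 2) $\Rightarrow$ 1) $\Rightarrow$ 3). The working object is the $C^*$-algebra $\mathcal{C}$ generated by $\overline{\Psi(Y,X)}$ and the operators $\widetilde T_\gamma$ in $\mathcal{B}(\mathcal{H})$, together with its Calkin image. Since every $\psi$DO of order zero with compactly supported kernel composed with a shift is again in the algebra, the compact operators $\mathcal{K}$ lie in $\overline{\Psi(Y,X)}$. It therefore suffices to prove the following identification:
$$
\mathcal{C}/\mathcal{K}\simeq (\overline{\Psi(Y,X)}/\mathcal{K})\rtimes\Gamma .
$$
Here the crossed product can be taken reduced or maximal, which agree because $\Gamma$ is amenable. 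Under this identification the triple $(D,P_1,P_2)$ is elliptic, in the sense of \eqref{ellip_rel}, exactly when the class of $D$ is invertible in the corner algebras $P_2(\cdot)P_1$ and $P_1(\cdot)P_2$ of the crossed product. The reason is that \eqref{ellip_rel} only involves $\sigma_{\rm int}$ and $\sigma_X$, and these are faithful on $\overline{\Psi(Y,X)}/\mathcal{K}$ by \eqref{short_seq_2} and the estimate \eqref{estimation_D}.

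For 1) $\Leftrightarrow$ 3), I would use Atkinson's theorem for operators between ranges of projections. The operator $D\colon P_1\mathcal{H}^N\to P_2\mathcal{H}^N$ is Fredholm if and only if there is $R$ with $DR-P_2$ and $RD-P_1$ compact. Such an $R$ may be taken in $\mathcal{C}$, since $C^*$-subalgebras are inverse-closed. Passing to the Calkin quotient and using the displayed isomorphism, this becomes invertibility in the crossed product, which is ellipticity. This requires knowing that the natural surjection $(\overline{\Psi(Y,X)}/\mathcal{K})\rtimes\Gamma\to\mathcal{C}/\mathcal{K}$ is injective. I would prove this with the isomorphism theorem of Antonevich--Lebedev recalled in the appendix. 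Its hypotheses are that $\Gamma$ is amenable and that the action on $\Prim(\overline{\Psi(Y,X)}/\mathcal{K})\simeq\mathfrak{Y}_X$ is topologically free, both of which we assume.

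For 1) $\Leftrightarrow$ 2), I would use the same isomorphism theorem in its second form. An element of $A\rtimes\Gamma$, with $A=\overline{\Psi(Y,X)}/\mathcal{K}$, is invertible if and only if all of its induced regular representations $\operatorname{Ind}\pi$ are invertible. These induced representations act on $l^2(\Gamma,H_\pi)$, and $\pi$ runs over the irreducible representations of $A$; this is legitimate because $A$ is GCR, so by the Corollary irreducible representations are in bijection with $\Prim A$. By \eqref{repr_symb_alg}, these $\pi$ are $\sigma_{\rm int}(m)$ and $\sigma_X(m')$. One then checks from Proposition~\ref{BdM_symb_thm} that $\operatorname{Ind}\sigma_{\rm int}(m)$ and $\operatorname{Ind}\sigma_X(m')$ are precisely the trajectory symbols \eqref{Gamma_BdM_int_traj_symb} and \eqref{eq-bnd-trj1}. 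The boundary case also uses the twisted homogeneity factor $J_N^{1/2}$ and the identification of the induced action on the fibres $L^2(\mathcal{N}^*X)\oplus\mathbb{C}$. Compressing by the trajectory symbols of $P_1$ and $P_2$ gives \eqref{traj_ell_in_G}–\eqref{traj_ell_b_G}. One also needs the invertibility to be uniform in the point. This is automatic from the theorem, because the family of induced representations is faithful on the crossed product and the inverse is an element of the algebra.

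I expect the main obstacle to be the injectivity, i.e.\ applying the isomorphism theorem, together with correctly matching the topology and the $\Gamma$-action on $\mathfrak{Y}_X$. The space is non-Hausdorff, and topological freeness must be verified in the Jacobson topology. The isomorphism theorem requires a faithful covariant representation in which $\widetilde T_\gamma$ acts. Here the possibly infinite union $X$ of the components $\gamma(Z_j)$ means I must check that $\rho$ on the Calkin quotient is still a covariant representation compatible with the symbols of Proposition~\ref{BdM_symb_thm}. I would do this first on each $\Psi_X(Y,X_N)$ and then pass to the limit using \eqref{short_seq_2}.
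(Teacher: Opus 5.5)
Your proposal is correct and follows essentially the same route as the paper. You identify the Calkin image of the algebra generated by $\overline{\Psi(Y,X)}$ and the shifts $\widetilde T_\gamma$ with $(\overline{\Psi(Y,X)}/\mathcal{K})\rtimes\Gamma$ via the Antonevich--Lebedev isomorphism theorem, using amenability and topological freeness on $\Prim\Sigma\simeq\mathfrak{Y}_X$. You then get 1)$\Leftrightarrow$3) from Atkinson's theorem, and 1)$\Leftrightarrow$2) from the criterion that an element of the crossed product is invertible if and only if all its trajectory (induced) representations are.

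Two small points to tighten. First, the paper reduces invertibility between the ranges of $P_1,P_2$ to invertibility of $D^*D+(1-P_1)$ and $DD^*+(1-P_2)$ before applying that criterion, and you need the same step. Second, uniformity of the inverses in the point is part of the content of the cited theorem; it does not follow merely from faithfulness of the family $\{\operatorname{Ind}\pi\}$.
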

\begin{remark}
Condition 1) in Theorem~\ref{tr_kon_G} implies Conditions 2) and 3) without any assumptions on $\Gamma$ and its action.
\end{remark}
\begin{proof}
1. Let us prove that 1) implies 3). Indeed, given a triple $(R, P_2, P_1)$, consider the corresponding $\Gamma$-operator
$$
R: P_2(\mathcal{H}\otimes\mathbb{C}^N)\longrightarrow P_1(\mathcal{H}\otimes\mathbb{C}^N).
$$
We claim that $R$ is a two-sided almost inverse of~\eqref{D_restr_G}. Indeed,~\eqref{ellip_rel} implies
$$
\sigma_{\rm{int}}(DR-P_2)=0,\qquad \sigma_X(DR-P_2)=0.
$$
Therefore the element $DR-P_2\in{\rm{Mat}}_N(\overline{\Psi(Y,X)}{\rtimes}\Gamma)$ lies in the subalgebra ${\rm{Mat}}_N(\mathcal{K}{\rtimes}\Gamma)$ and the corresponding operator in $\mathcal{H}\otimes\mathbb{C}^N$ is compact. So we obtain
$$
DR=P_2+K_1,\qquad\text{ where }K_1\in\mathcal{K}.
$$
Similarly, one can prove that
$$
RD=P_1+K_2,\qquad\text{ where }K_2\in\mathcal{K}.
$$
Hence $R$ is an almost inverse operator for $D$ up to compact operators and the operator~\eqref{D_restr} has the Fredholm property.

2. Now we prove that 1) implies 2). Indeed, the inverse operators for the trajectory symbols~\eqref{traj_ell_in} and~\eqref{traj_ell_b} are the trajectory symbols of $R$:
\begin{align*}
\sigma_{\rm{int}}(R)(m):
P_2(m) l^2(\Gamma,\mathbb{C}^N) &\rightarrow
P_1(m) l^2(\Gamma,\mathbb{C}^N),
\\
\sigma_X(R)(m')\!:\!
P_2(m')
l^2(\Gamma,(L^2(\mathcal{N}^*_{m'}X) \oplus\mathbb{C}){\otimes}\mathbb{C}^N)
&\rightarrow
P_1(m')
l^2(\Gamma,(L^2(\mathcal{N}^*_{m'}X) \oplus\mathbb{C}){\otimes}\mathbb{C}^N).
\end{align*}

3. Now we prove that 3) implies 1). It is known that the operator
$D:{\rm Im}P_1\to{\rm Im}P_2$ has the Fredholm property if and only if the two operators
$$
D^*D:{\rm Im}P_1\to{\rm Im}P_1\quad\text{and}\quad DD^*:{\rm Im}P_2\to{\rm Im}P_2
$$
have the Fredholm property.
This condition is equivalent to the Fredholm property of the operators
\begin{equation}
\label{comp_adj}
D^*D+(1-P_1):
\mathcal{H}\otimes\mathbb{C}^N\longrightarrow
\mathcal{H}\otimes\mathbb{C}^N,\qquad
DD^*+(1-P_2):
\mathcal{H}\otimes\mathbb{C}^N\longrightarrow
\mathcal{H}\otimes\mathbb{C}^N.
\end{equation}
These operators are Fredholm if and only if the corresponding elements in the Calkin algebra ${\rm Mat}_N(\mathcal{B}(\mathcal{H})/\mathcal{K})$
are invertible. Denote by
$\mathcal{C}\subset{\rm Mat}_N(\mathcal{B}(\mathcal{H})/\mathcal{K})$
the $C^*$-subalgebra  generated by the shift operators $\widetilde{T}_\gamma$, $\gamma\in\Gamma$, and the operators in ${\rm{Mat}}_N(\overline{\Psi(Y,X)})$.

Consider the mapping
\begin{equation}
\label{subalg_mapp}
\begin{aligned}
\Sigma{\rtimes}\Gamma& \longrightarrow \mathcal{C},\\
a=\{a_{{\rm int},\gamma}, a_{X,\gamma}\}&\longmapsto
\Bigl[\sum\limits_{\gamma} \operatorname{Op} (a_{{\rm
int},\gamma},
a_{X,\gamma})\widetilde{T}_\gamma\Bigr],
\end{aligned}\quad
\text{ãäå } \Sigma=\overline{\Psi(Y,X)}/\mathcal{K},
\end{equation}
where ${\rm Op} (a_{\rm int}, a_X)$ stands for a Boutet de Monvel operator with an interior symbol $a_{\rm int}$ and a boundary symbol on $X$ equal to $a_X$.

The application of the isomorphism theorem~\cite[Theorem 12.17]{AnLe1} shows that the mapping~\eqref{subalg_mapp} is an isomorphism of $C^*$-algebras. The conditions in the isomorphism theorem are satisfied since by assumption $\Gamma$ is amenable and its action on the spectrum $\widehat{\Sigma}$ of the algebra of symbols is topologically free.

Using the isomorphism~\eqref{subalg_mapp} we obtain that the operators~\eqref{comp_adj} are Fredholm if and only if their symbols
$$
\sigma(D^*D+(1-P_1)),\quad \sigma(DD^*+(1-P_2))\in{\rm Mat}_N(\Sigma{\rtimes}\Gamma)
$$
are invertible.
These elements are invertible if and only if there exist elements $r_1,r_2$ such that $r_j=p_jr_jp_j$, where $p_j=\sigma(P_j), j=1,2$, and
\begin{equation}
\label{symb_comp_r}
\sigma(D^*D)r_1=r_1\sigma(D^*D)=p_1,\qquad\sigma(DD^*)r_2=r_2\sigma(D^*D)=p_2.
\end{equation}
It follows from~\eqref{symb_comp_r} that $(D,P_1,P_2)$ is elliptic, and as an almost inverse triple one can take a triple $(R,P_2,P_1)$, where $\sigma(R)=r_1\sigma(D^*)=\sigma(D^*)r_2$.

4. Let us now prove that 2) implies 1). Similar to the previous step of the proof the ellipticity of $(D,P_1,P_2)$ is equivalent to the invertibility of the elements
$$
\sigma(D^*D+(1-P_1)),\quad \sigma(DD^*+(1-P_2))\in{\rm Mat}_N(\Sigma{\rtimes}\Gamma).
$$
Also, the trajectory ellipticity of $(D,P_1,P_2)$ is equivalent to the invertibility of the trajectory symbols of the operators
$$
D^*D+(1-P_1),\quad DD^*+(1-P_2).
$$
We can apply~\cite[Theorem 12.17]{AnLe1}, which states that an element in a crossed product ${\rm Mat}_N(\Sigma{\rtimes}\Gamma)$ is invertible if and only if all its trajectory representations are invertible. This gives the equivalence of conditions 1) and 2) in Theorem~\ref{tr_kon_G}.
\end{proof}

\section{Generalization to Non-topologically Free Actions}\label{sec6}
Here we give a generalization of Theorem~\ref{tr_kon} and its proof in Section~\ref{sec_C*_theory} to the case when the group action is not topologically free. This is done using results in~\cite[Appendix~B]{AnLe2}.

We use the following notation:
\begin{itemize}
\item  $\Omega=\Prim\Sigma$ is the space of primitive ideals of the symbol algebra;
\item  $\pi_m:\Sigma\to\mathcal{B}(H_m), m\in\Omega$, are the corresponding irreducible representations;
\item
$\Omega_\gamma=\{m\in\Omega\mid\gamma m=m\}$ is the fixed point set of $\gamma$, while $\mathring{\Omega}_\gamma$ is its interior;
\item
$I_\gamma=\{a\in\Sigma\mid\pi_m(a)=0\,\;\;\forall m\notin \mathring{\Omega}_\gamma\}$ is an ideal in $\Sigma$;
\item
$\Gamma_m=\{\gamma\in\Gamma\mid m\in\mathring{\Omega}_\gamma\}$ is a subgroup in $\Gamma$.
\item
Moreover, we embed $\Sigma$ as a closed subalgebra of the bounded operators on the Hilbert space
\begin{equation}
\label{Sigma_H_sp}
\mathcal{H}=L^2(S^*Y)\oplus L^2(S^*X,L^2(\mathcal{N}^*X))
\end{equation}
with respect to suitable volume forms and
\item
we  define (similar to~\eqref{unit_repres}) the unitary representation
$\Gamma\to\mathcal{BH}$  by unitary shift operators denoted by $\overline{T}_\gamma$.
\end{itemize}
Consider the following conditions.
\begin{cond}\label{cond2}
$\Gamma_m$ is a normal subgroup in $\Gamma$ and $\Gamma^m=\Gamma/\Gamma_m$ the quotient group.
\end{cond}
\begin{cond}\label{cond3}
Given any $m\in\Omega$, $\Gamma^m$ {\it{acts topologically freely at}} $m$ in the following sense: for all finite sets $\{\gamma_1,\dots,\gamma_N\}\subset\Gamma\setminus\Gamma_m$ the union $\Omega_{\gamma_1}\cup\dots\cup\Omega_{\gamma_N}$ contains no neighborhood of $m$.
\end{cond}
\begin{cond}\label{cond4}
We have $\overline{T}_\gamma I_\gamma\subset I_\gamma$ for all $\gamma\in\Gamma$ and moreover there exists a multiplier $u_\gamma\in M(I_\gamma)$ such that $\overline{T}_\gamma a=u_\gamma a,$ $\forall a\in I_\gamma$.
\end{cond}
Conditions \ref{cond2}, \ref{cond3}, and \ref{cond4} are precisely the assumptions of the isomorphism theorem in~\cite[Theorem B.28]{AnLe2} and if they are satisfied, then this theorem implies that there is an injection of $C^*$-algebras
\begin{equation}
\label{C^*_inj}
\overline{\Psi_\Gamma(Y,X)}/\mathcal{K}\stackrel{\bigoplus\limits_m{p_m}}{\longrightarrow}\bigoplus\limits_{m\in\Omega}{\mathcal{B}(l^2(\Gamma^m,H_m))},
\end{equation}
where $\Psi_\Gamma(Y,X)\subset\mathcal{B}(L^2(Y)\oplus L^2(X))$ is the algebra generated by $\Psi(Y,X)$ and the shift operators $\widetilde{T}_\gamma$. The mapping $p_m$ in~\eqref{C^*_inj} is called the {\it{trajectory symbol at the point}} $m$ and is defined as follows:
\begin{itemize}
\item we choose an arbitrary mapping $\delta:\Gamma^m\longrightarrow\Gamma$ such that $\delta_e=e$; $\delta_{\gamma^{-1}}=(\delta_\gamma)^{-1}$ and $\pi\delta_\gamma=[\gamma]$, where $\pi:\Gamma\longrightarrow\Gamma^m$ is the natural projection;
\item given $a\in\Sigma,$ we set
\begin{equation}
\label{traj_s_1}
    [p_m(a)f](h)=\pi_m(\delta_h(a))f(h);
\end{equation}
\item given $\gamma \in\Gamma_m$, we set
\begin{equation}
\label{traj_s_2}
 [p_m(\widetilde{T}_\gamma)f](h) =\pi_m(\delta_h(u_\gamma))f(h),
\end{equation}
where the element $\pi_m(\delta_h(u_\gamma))\in  \mathcal{B} (H_m) $ is uniquely defined in terms of the multiplier $u_{\gamma }$ above (see~\cite[pages~327-328]{AnLe2} for more details);
\item  given $\gamma\in\Gamma$, we set
\begin{equation}
\label{traj_s_3}
[p_m(\widetilde{T}_{\delta_\gamma})f](h)=\pi_m(\delta_h(u_{\delta_\gamma\delta^{-1}_{h\gamma}\delta_{ h}}))f(h\gamma);
\end{equation}
\item now, given an arbitrary sum
\begin{equation}
\label{arb_sum}
A=\sum\limits_{\gamma\in\Gamma}A_\gamma\widetilde{T}_\gamma\in\Psi_\Gamma(Y,X),
\end{equation}
we set
\begin{equation}
\label{p_m[A]}
p_m[A]=\sum\limits_{\gamma\in\Gamma}p_m(\sigma(A_\gamma))p_m(\widetilde{T}_{\gamma\delta_\gamma^{-1}})p_m(\widetilde{T}_{\delta_\gamma})\in\mathcal{B}(l^2(\Gamma^m,H_m)),
\end{equation}
where we use~\eqref{traj_s_1},~\eqref{traj_s_2},~\eqref{traj_s_3} to define the factors.
\end{itemize}

It follows from~\cite[Theorem B.28]{AnLe2} that $p_m[A]$ is well defined (i.e., it does not depend on the choice of the representative~\eqref{p_m[A]} in the class $[A]$) and extends to a representation of the $C^*$-algebra $\overline{\Psi_\Gamma(Y,X)}/\mathcal{K}$. Moreover, the direct sum of all such representations over all $m\in\Omega$, see~\eqref{C^*_inj}, is injective and $[A]$ is invertible if and only if all $p_m[A]$ are invertible.

\subsection*{Example}
Our setting is as follows:
\begin{itemize}
\item We have an ambient manifold $Y=\mathbb{S}^1\times \mathbb{R}$ and a codimension zero submanifold with boundary $M=\mathbb{S}^1\times[0,1]$;
\item $\mathbb{Z}$ acts on $Y$: $(x,t)\longmapsto(x,2^kt)$, $k\in\mathbb{Z}$;
\item $X=\mathbb{Z}(\partial M)=\mathbb{S}^1\times\left(
\{0\}\cup\{2^k,k\in\mathbb{Z}\}
\right)$ with the topology of the disjoint union;
\item $X_N=\mathbb{S}^1\times(\{0\}\cup\{2^k:|k|\leq N\})$;
\item $Y_\infty=\mathbb{S}^1\times \left(
(-\infty,0]\sqcup \bigsqcup\limits_k [2^k,2^{k+1}]
\right)$ is obtained from $Y$ by an infinite number of cuts.
\end{itemize}
We consider the following spaces and operators
\begin{itemize}
\item the space $\mathcal{H}=L^2(Y)\oplus L^2(X)$ and the unitary shift operator
\begin{equation*}
\begin{split}
\widetilde{T}\colon\mathcal{H}&\longrightarrow \mathcal{H};\\
(u(x,t),v(x,t'))&\xrightarrow{\widetilde{T}}
(2^{-1/2}u(x,2^{-1}t),v(x,2^{-1}t')),\quad
t'\in\{0\}\cup\{2^k,k\in\mathbb{Z}\};
\end{split}
\end{equation*}
\item the algebra $\Psi(Y,X)=\bigcup\limits_{N\geq1}\Psi_X(Y,X_N)$, where $\Psi_X(Y,X_N)$ is the algebra of transmission problems on $Y$  of order and type zero with transmission conditions on $X_N$ and we suppose in addition that $\psi$DOs have the transmission property on $X$;
\item the $\mathbb{Z}$-operators on $Y$ of the form
\begin{equation}
\label{z_oper}
A=\sum\limits_k A_k\widetilde{T}^k:\mathcal{H}\longrightarrow\mathcal{H},\text{ where }A_k\in\Psi(Y,X).
\end{equation}
\end{itemize}

\subsection*{\bf{Trajectory symbols of $\mathbb{Z}$-operators.}}
For $\mathbb{Z}$-operators of the form \eqref{z_oper} we define trajectory symbols at different points:

1. Given $m=(x,t,\xi,\tau)\in S^\vee Y^+_\infty\subset\Omega$, the action of $\mathbb{Z}$ on $\Omega=\mathfrak{Y}_X=S^\vee Y^+_\infty\cup S^*X$ is topologically free at $m$ and the interior trajectory symbol
\begin{equation}
\label{int_symb}
\sigma_{\rm{int}}(A):l^2(\mathbb{Z})\longrightarrow l^2(\mathbb{Z}),\quad(\sigma_{\rm{int}}(A)(m)u)(n)=\sum\limits_k\sigma_{\rm{int}}(A_k)
(x,2^{-n}t,\xi,2^n\tau)u(n+k)
\end{equation}
is a difference operator with variable coefficients;

2. Given $m'=(x,\xi,t)\in S^*X\setminus \{S^*(\mathbb{S}^1\times\{0\})\}\subset\Omega$, the action of $\mathbb{Z}$ on $\Omega$ is topologically free at $m'$ and the boundary trajectory symbol
\begin{equation}
\label{boun_symb_1}
\sigma_X(A)(m'):l^2(\mathbb{Z},L^2(\mathbb{R})\oplus\mathbb{C})
\longrightarrow
l^2(\mathbb{Z},L^2(\mathbb{R})\oplus\mathbb{C})
\end{equation}
$$
(\sigma_X(A)(m')u)(n)=\sum\limits_k
{\kappa}_{2^n}\sigma_X(A_k)
(x,\xi,2^{-n}t){\kappa}^{-1}_{2^n}u(n+k)
$$
is a difference operator with variable operator coefficients. Note that the operators ${\kappa}_\lambda$ were defined in~\eqref{kappalambda}.

3. Given $m'=(x,\xi)\in S^*(\mathbb{S}^1\times\{0\})\subset\Omega$, we claim that the action of $\mathbb{Z}$ on $\Omega$ is not topologically free at $m'$. Indeed, given $k\ne 0\in \mathbb{Z}$, its fixed point set is an open set equal to
$$
\Omega_k=\mathring{\Omega}_k=S^*(\mathbb{S}^1\times\{0\})  .
$$
Hence, the action is not topologically free at $m'$. Let us check that Conditions~\ref{cond2},\ref{cond3},\ref{cond4} are satisfied in this case. We take $k\ne 0$ and consider the ideal
$$
 I_k=C(S^*(\mathbb{S}^1\times\{0\}),\mathcal{K}(L^2(\mathbb{R})\oplus\mathbb{C})),
$$
which is clearly independent of $k$. Then, we consider groups $\Gamma_{m'}=\mathbb{Z}$, $\Gamma^{m'}=\{0\}$. This implies that Conditions~\ref{cond2},\ref{cond3} are fulfilled.   Finally, we define the multiplier in Condition~\ref{cond4}
$$
u={\kappa}_{2}\in M(I_k)\quad\text{such that } \overline{T}^ka=u^k a,\quad \forall k\ne 0, a\in I_k.
$$
Then we obtain the boundary symbol (we set $\delta_0=0$ in \eqref{traj_s_1} and \eqref{traj_s_2})
\begin{equation}
\label{boun_symb_2}
\begin{aligned}
\sigma_X(A)(m')\colon L^2(\mathbb{R})\oplus\mathbb{C}&\longrightarrow
L^2(\mathbb{R})\oplus\mathbb{C},
\\
\sigma_X(A)(m')u&=\sum\limits_k
\sigma_X(A_k)
(x,\xi,t=0){\kappa}_{2^k}u,
\end{aligned}
\end{equation}
as a $\psi$DO on $\mathbb{R}$ with dilations.

Thus,  we see that conditions of the isomorphism theorem~\cite[Theorem B.28]{AnLe2} are satisfied and we get the following corollary.
\begin{cor}
The operator $A$ has the Fredholm property if and only if all trajectory symbols~\eqref{int_symb}, \eqref{boun_symb_1}, \eqref{boun_symb_2} are invertible.
\end{cor}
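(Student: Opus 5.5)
The corollary follows by feeding the verification just carried out into the isomorphism theorem \cite[Theorem B.28]{AnLe2}. The steps are: check Conditions~\ref{cond2}, \ref{cond3}, \ref{cond4} at every $m\in\Omega=\mathfrak{Y}_X$; identify the abstract trajectory symbols $p_m$ of \eqref{p_m[A]} with the concrete formulas \eqref{int_symb}, \eqref{boun_symb_1}, \eqref{boun_symb_2}; and conclude using the fact that the Fredholm property of $A$ is equivalent to invertibility of $[A]$ in $\overline{\Psi_\Gamma(Y,X)}/\mathcal{K}$. That last fact holds because $\overline{\Psi_\Gamma(Y,X)}/\mathcal{K}$ is a $C^*$-subalgebra of the Calkin algebra, and $C^*$-subalgebras are inverse-closed.

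\textbf{Checking the conditions.} Split $\Omega$ into three parts.
\begin{enumerate}
\item[(a)] The interior points $S^\vee Y^+_\infty$.
\item[(b)] The boundary points $S^*X$ lying over $\mathbb{S}^1\times\{2^j\}$.
\item[(c)] The boundary points over $\mathbb{S}^1\times\{0\}$.
\end{enumerate}
In (a) and (b) I want $\Gamma_m=\{0\}$. There the dilation $t\mapsto 2^kt$, $k\neq0$, moves every point. The only exceptions are points with $t=0$ in (a) and the compactification point; at these the action on $(\xi,\tau)$ is $\tau\mapsto 2^k\tau$ up to normalisation, which fixes only the covectors with $\tau=0$ or $\xi=0$. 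Each such fixed set has empty interior, because $\xi\neq0,\tau\neq0$ points accumulate on it; the closure relation \eqref{clos_un} must be respected here. So Conditions~\ref{cond2} and \ref{cond3} hold trivially, and Condition~\ref{cond4} is vacuous since $\mathring{\Omega}_k$ does not meet these points. For (c) I invoke the computation already given: $\mathring{\Omega}_k=S^*(\mathbb{S}^1\times\{0\})$, $\Gamma_{m'}=\mathbb{Z}$ (normal, since $\mathbb{Z}$ is abelian), $\Gamma^{m'}$ trivial, and the multiplier $u=\kappa_2$ with $\overline{T}^k a=u^k a$ on $I_k$. I must verify that this identity holds. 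It follows from the symbol transformation rule of Proposition~\ref{BdM_symb_thm}: the dilation acts on the conormal fibre by $\tau\mapsto2^{k}\tau$, which after the $J_N^{1/2}$ normalisation is exactly $\kappa_{2^k}$ acting on $L^2(\mathbb{R})$ (and trivially on $\mathbb{C}$). I must also check that $I_k$ is an ideal of $\Sigma$ consisting of the symbols vanishing off $\mathring{\Omega}_k$. By the composition series of \eqref{short_seq_2}, such symbols have zero interior part, so they are compact-valued boundary symbols supported on that component.

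\textbf{Identifying the symbols.} Substitute into \eqref{traj_s_1}--\eqref{p_m[A]}. For (a) and (b), $\Gamma^m=\mathbb{Z}$ and $\delta$ is the identity, and $\pi_m(\delta_h(a))$ is the symbol evaluated at the shifted point. This gives the difference operators \eqref{int_symb} and \eqref{boun_symb_1}, where the conjugations by $\kappa_{2^n}$ come from the twisted homogeneity of boundary symbols. For (c), $l^2(\Gamma^{m'},H_{m'})=L^2(\mathbb{R})\oplus\mathbb{C}$, and $p_{m'}(\widetilde{T}^k)=\pi_{m'}(u^k)=\kappa_{2^k}$, which yields \eqref{boun_symb_2}.

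\textbf{Conclusion.} By \cite[Theorem B.28]{AnLe2}, $\bigoplus_m p_m$ is injective, and $[A]$ is invertible if and only if all $p_m[A]$ are invertible. The main obstacle I expect is the rigorous verification of Condition~\ref{cond4} at (c): showing that $\overline{T}^k$ restricted to $I_k$ really is left multiplication by a multiplier. The difficulty is that near $t=0$ the surfaces $2^j$ accumulate, so one must confirm that $I_k$ is not contaminated by nearby components. I would handle this using the disjoint-union topology of $X$, under which $S^*(\mathbb{S}^1\times\{0\})$ is clopen in $S^*X$ and hence open in $\Omega$.
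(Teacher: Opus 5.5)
Your proposal is correct and follows the paper's route: topological freeness at the interior points and at the boundary points over $\mathbb{S}^1\times\{2^j\}$; at $S^*(\mathbb{S}^1\times\{0\})$ the data $\Gamma_{m'}=\mathbb{Z}$, trivial $\Gamma^{m'}$, the ideal $I_k$ and the multiplier $u=\kappa_2$; then \cite[Theorem B.28]{AnLe2}, with your extra steps (inverse-closedness of the Calkin subalgebra, matching the $p_m$ with \eqref{int_symb}, \eqref{boun_symb_1}, \eqref{boun_symb_2}) only filling in what the paper leaves implicit. Minor wording: Condition~\ref{cond4} is imposed per group element, not per point, so it is not ``vacuous'' in cases (a), (b) but is exactly what you check in (c); and the identity $\overline{T}^k a=\kappa_{2^k}a$ on $I_k$ comes from the shift fixing $S^*(\mathbb{S}^1\times\{0\})$ and acting fibrewise by $\kappa_{2^k}$, whereas the conjugation formula of Proposition~\ref{BdM_symb_thm} by itself only shows that the automorphism is inner.
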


\subsection*{\bf{Restriction to the submanifold with boundary.}}

Let us now obtain the Fredholm criterion for the restrictions of our operators to $M=\mathbb{S}^1\times[0,1]\subset Y=\mathbb{S}^1\times\mathbb{R}$. Consider the projection
$$
\mathcal{P}=(\chi_M,\chi_{\partial M})\in
C^\infty_c(Y_1)\oplus C^\infty_c(X)^+\subset \Psi(Y,X).
$$
Its symbols are equal to
\begin{align*}
\sigma_{\rm{int}}(\mathcal{P})(x,t,\xi,\tau)&=\chi_M(x,t)\in
C(S^\vee Y^+_\infty),
\\
\sigma_X(\mathcal{P})(x,t,\xi):
L^2(\mathbb{R})\oplus\mathbb{C} &\longrightarrow
L^2(\mathbb{R})\oplus\mathbb{C}, \text{ ãäå }
t\in\{0\}\cup\{2^k,k\in\mathbb{Z}\} ,
\\
\sigma_X(\mathcal{P})(x,t,\xi)&=
\left\{
\begin{aligned}
\chi_M(x,t)\oplus \operatorname{Id}, & \quad\text{ åñëè } t\ne0,
 1,\\
\Pi^+\oplus\operatorname{Id}, & \quad \text{ åñëè } t=0,\\
\Pi^-\oplus\operatorname{Id}, & \quad\text{ åñëè } t=1.\\
\end{aligned}
\right.
\end{align*}
Given a $\mathbb{Z}$-operator  \eqref{z_oper}, we consider its restriction to $M=\mathbb{S}^1\times[0,1]$
\begin{equation}
\label{Z_op_restr}
\mathcal{P}A\mathcal{P}:\mathcal{PH} \longrightarrow\mathcal{PH},\;\;\text{where }
\mathcal{PH}=L^2(\mathbb{S}^1\times[0,1])\oplus
\bigoplus\limits_{t\in\{0,2^k,k\leq0\}} L^2(\mathbb{S}^1\times\{t\})
\end{equation}
to $M=\mathbb{S}^1\times[0,1]$. The trajectory symbols of the operator~\eqref{Z_op_restr} are described as follows.

Interior trajectory symbol at $m=(x,t,\xi,\tau)$, $\frac12<t\leq 1$, is equal to
\begin{equation*}
\begin{split}
\sigma_{\rm{int}}(\mathcal{P}A\mathcal{P})(m)&\colon
l^2(\mathbb{Z}_+)\longrightarrow l^2(\mathbb{Z}_+),\\
(\sigma_{\rm{int}}(\mathcal{P}A\mathcal{P}u))(n)
&=\sum\limits_{k\geq-n}\sigma_{\rm{int}}
(A_k)(x,2^{-n}t,\xi,2^n\tau)u(n+k).
\end{split}
\end{equation*}
Interior trajectory symbol at $m=(x,t,\xi,\tau)$, $t=0$, is equal to
$$
\sigma_{\rm{int}}(\mathcal{P}A\mathcal{P})(m):l^2(\mathbb{Z})\longrightarrow l^2(\mathbb{Z}),\quad
(\sigma_{\rm{int}}(\mathcal{P}A\mathcal{P}u))(n)=\sum\limits_{k}\sigma_{\rm{int}}(A_k)(x,0,\xi,2^n\tau)u(n+k).
$$
Boundary trajectory symbol at $m'=(x,t,\xi), t=1,$ is equal to
$$
\sigma_X(A)(m'): l^2(\mathbb{Z}_{\geq 1},L^2(\mathbb{R})\oplus\mathbb{C})\oplus \overline{H^-} \longrightarrow
l^2(\mathbb{Z}_{\geq 1},L^2(\mathbb{R})\oplus\mathbb{C})\oplus\overline{H^-}
$$
$$
(u(n),u(0))\longmapsto
\left\{
\begin{matrix}
&\sum\limits_{k\geq-n}{\kappa}_{2^n}\sigma_X(A_k)(x,2^{-n},\xi)
{\kappa}^{-1}_{2^n}u(n+k), & \text{if }n\geq 1 \\
&\Pi^-(\sum\limits_{k\geq-n}{\kappa}_{2^n}\sigma_X(A_k)(x,2^{-n},\xi)
{\kappa}^{-1}_{2^n}u(n+k)), & \text{if }n=0.
\end{matrix}
\right.
$$
Boundary trajectory symbol at $m'=(x,0,\xi)$ is equal to
$$
\sigma_X(A)(m'): \overline{H^+}\oplus\mathbb{C} \longrightarrow
\overline{H^+}\oplus\mathbb{C},\qquad
u\longmapsto \Pi^+
\sum\limits_k\sigma_X(A_k)(x,0,\xi)
{\kappa}_{2^k}u.
$$

\section{Appendix. Some Results from $C^*$-algebras}
Let $A$ be a $C^*$-algebra and $\widehat{A}$  its spectrum, i.e., set of the equivalence classes of irreducible representations $\pi:A\to\mathcal{B}(H)$, where $H$ is a Hilbert space. We shall denote by $\Prim A$  the set of primitive ideals, i.e., $\Prim A=\{\ker\pi\subset A| \pi\in \widehat{A}\}$.

\begin{itemize}
\item \cite[Definition 4.2.1]{Dix1}: $A$ is a CCR-algebra (or liminal algebra) $\Leftrightarrow \forall\pi\in\widehat{A}, a\in A$ the operator $\pi(a)$ is compact.
\item \cite[Corollary 10.4.5]{Dix1}: Given a locally compact space $X$, $C_0(X,\mathcal{K})$ is a CCR-al\-ge\-bra and one has a canonical homeomorphism $\widehat{C_0(X,\mathcal{K})}\simeq X.$
\item \cite[Definition 4.3.1]{Dix1}: $A$ is a GCR-algebra (or postliminal algebra) $\Leftrightarrow
\forall$ ideals $J\ne A$ the quotient $A/J$ contains a nonzero CCR ideal.
\item \cite[Proposition 4.3.4]{Dix1}: Let $A$ be a $C^*$-algebra. Then $A$ is a GCR-algebra $\Leftrightarrow$ $\exists$ a composition series
$0=I_0\subset I_1\subset \dots \subset I_\alpha=A$, i.e. an increasing family of ideals, where $I_{p+1}/I_p$ is a CCR-algebra for all $p$.
\item \cite[Theorem 4.3.7]{Dix1}: If $A$ is a GCR-algebra, then the mapping $\widehat{A}\to\Prim{A}, \pi\mapsto\ker\pi$ is a bijection.
\item \cite[Definition 3.1.1]{Dix1}: The Jacobson topology on $\Prim A$: for a set $T\subset \Prim A$ we define
$$
 \overline{T}=\Bigl\{J\in\Prim A |\bigcap\limits_{I\in T}{I\subset J}\Bigr\}.
$$
This closure operation uniquely generates a topology on $\Prim A$.
\item \cite[Theorems 3.4.10 and 3.4.4]{Dix1}: Let $\pi\in\widehat{A}, S\subset \widehat{A}$. Then $\pi\in\overline{S}\Leftrightarrow\pi$ is weakly contained in $S\Leftrightarrow\bigcap\limits_{\rho\in S}\ker\rho\subset\ker\pi$.
\item \cite[Propositions 2.11.5 and 3.2.1]{Dix1}: Let $I\subset A$ be an ideal. Let
$$
\Prim_I(A)=\{J\in\Prim A| I\subset J\},
$$
$$
\Prim^I(A)=\{J\in\Prim A| I\not\subset J\}.
$$
Then we have the homeomorphisms
$$
\Prim_I(A)\longrightarrow\Prim(A/I)\qquad J\longmapsto J/I,
$$
$$
\Prim^I(A)\longrightarrow\Prim(I)\qquad J\longmapsto J\cap I,
$$
where the set $\Prim_I(A)$ is closed in $\Prim A$ and $\Prim^I(A)$ is open in $\Prim A$.
\end{itemize}

\end{document}